\documentclass[12pt]{amsart}

\newtheorem{theorem}{Theorem}[section]

\usepackage{tcolorbox}
\usepackage{xcolor}

\usepackage{pdfsync}
\usepackage{geometry}
\geometry{letterpaper} 

\usepackage{lscape}
\usepackage{graphicx}
\usepackage{epstopdf}    
\DeclareGraphicsRule{.tif}{png}{.png}{`convert #1 `dirname #1`/`basename #1 .tif`.png}

\usepackage[all]{xy}
\usepackage{amsmath,amsthm,amssymb,enumerate}
\usepackage{latexsym}
\usepackage{amscd}
\usepackage{amssymb}
\usepackage{enumitem}
\usepackage[mathscr]{euscript}



\theoremstyle{plain}  
\newtheorem{equ}[theorem]{Equation}
\newtheorem{diag}[theorem]{Diagram}
\newtheorem{thm}[theorem]{Theorem}
\newtheorem{prop}[theorem]{Proposition}
\newtheorem{claim}[theorem]{Claim}
\newtheorem{cor}[theorem]{Corollary}
\newtheorem{lemma}[theorem]{Lemma}
\newtheorem{conj}[theorem]{Conjecture}

\newtheorem{remark}[theorem]{Remark}


\newcommand{\ul}{\underline}

\newcommand{\ra}{\rightarrow}

\newcommand{\lra}{\longrightarrow}

\newcommand{\BPP}[1]{ \overline{\ul{BP}}_{\; #1}}
\newcommand{\SBPP}{\overline{BP}}
\newcommand{\BoP}[1]{ \ul{BoP}_{\;#1}}
\newcommand{\ZZ}[1]{ \ul{Z}_{\;#1}}
\newcommand{\ZZZ}[1]{ \ul{Z}'_{\;#1}}
\newcommand{\X}[1]{ \ul{X}_{\;#1}}

\newcommand{\bu}[1]{ \ul{bu}_{\;#1}}
\newcommand{\KO}[1]{ \ul{KO}_{\;#1}}
\newcommand{\bo}[1]{ \ul{bo}_{\;#1}}
\newcommand{\F}[1]{ \ul{F}_{\;#1}}


\newcommand{\Z}{\mathbb Z}

\newcommand{\BPn}[2]{\ul{BP\langle  #1  \rangle}_{\; #2}}
\newcommand{\SBPn}[1]{BP\langle  #1  \rangle}
\newcommand{\BoPn}[2]{ \ul{BoP\langle #1 \rangle}_{\;#2}}
\newcommand{\SBoPn}[1]{ BoP\langle #1 \rangle}
\newcommand{\SBPPn}[1]{\overline{BP}\langle #1 \rangle}


\newcommand{\Zp}{\Z_{(2)}}
\newcommand{\Zq}{\Z/(2)}


\DeclareMathOperator{\Tor}{Tor}

\newcommand{\sA}{\mathscr{A}}

 \newcommand{\bigSigma} {\mbox{\LARGE{$\Sigma$}}}
 
 \DeclareRobustCommand{\bigWedge} {\bigvee}

\subjclass[2010]{55N10,55N22,55P15}

\begin{document}

\title{The Omega spectrum for Pengelley's $BoP$}

\author{W. Stephen Wilson}

\email{wwilson3@jhu.edu}

\address{Department of Mathematics, 
The Johns Hopkins University, 
Baltimore, MD 21218}

\keywords{homology, Hopf algebras, cobordism, homotopy type.}

\begin{abstract}
We compute the homology of the spaces in the Omega spectrum for $BoP$. 
There is no torsion in $H_*(\BoP{i})$ for $i \ge 2$, and 
things are only slightly more complicated for $i < 2$.
We find the complete homotopy type of $\BoP{i}$ for $i \le 6$ and conjecture the 
homotopy type for $i  > 6$.
This completes the computation of all $H_*(\ul{MSU}_{\;*})$.
\end{abstract}


\maketitle

\section{Context}

There are several standard (co)bordism theories and related
spectra.  We have, for example, unoriented (co)bordism, $MO$,
oriented (co)bordism, $MSO$, Spin (co)bordism, $MSpin$, complex
(co)bordism, $MU$, and special unitary cobordism, $MSU$.
These theories are associated with Omega spectra classifying them,
i.e. $M^k(X) = [X,
\underline{M}_{\;k}] 
$
with $\Omega 
\underline{M}_{\;k+1}
=
\underline{M}_{\;k}
$.
We are only interested in the $p=2$ versions so all spectra
should be considered localized at 2.  We will suppress the notation
and let $M = M_{(2)}$.

Thom, \cite[1954]{Thom}, computed the coefficients $MO_*$ and
gave the stable homotopy type of the spectrum $MO$.  It is just
the product of a lot of mod 2 Eilenberg-MacLane spectra.
This gives the complete homotopy type of the 
$\underline{MO}_{\;k}$
as well.  From Serre's computation, \cite[1953]{SerreEM}, 
of the cohomology of the
Eilenberg-MacLane spaces, we also know $
H_*(\underline{MO}_{\;k})$ (mod 2).
Wall, \cite[1960]{Wall}, showed $MSO$ (at 2) is the
stable product of integral and mod 2 Eilenberg-MacLane
spectra.  Again, this gives the complete homotopy type of
the $\underline{MSO}_{\;k}$ and, by Serre, the homology of these
spaces.
Anderson, Brown, and Peterson, \cite[1967]{ABP}, computed $MSpin_*$
and gave the stable homotopy type, and consequently, the unstable
homotopy type of $\underline{MSpin}_{\;k}$.  Stably, this is 
copies of the mod 2 Eilenberg-MacLane spectra and connected covers
of the spectrum $bo$.  Stong, \cite[1963]{Stong}, computed the cohomology
of (most of) the unstable connected covers for $bo$, giving  (most of)
$H_*(\underline{MSpin}_{\;k})$.
At $p=2$, Milnor, \cite[1960]{Mil:MU}, and Novikov, \cite[1962]{Nov:MU2},
both computed $MU_*$.  With the construction of the Brown-Peterson
spectrum, \cite[1966]{BP}, the stable homotopy of $MU$ was described
as a product of $BP$ spectra.
In \cite[1973]{WSWThesis}, the homology of $\underline{BP}_{\;k}$ was
computed and in \cite[1975]{WSWThesis2} a complete description of
the unstable homotopy type was given.

$MSU$ has taken more time.  Conner and Floyd finished the computation
of $MSU_*$ in \cite[1966]{CFSU}, but it wasn't until Pengelley
constructed the spectrum $BoP$ (the subject of this paper) in
\cite[1982]{Peng}, that the stable homotopy type of $MSU$ (at 2) was given
as the product of copies of $BP$ and $BoP$.
The purpose of this paper is to compute the homology of the 
$\underline{BoP}_{\;k}$ and to conjecture the unstable homotopy 
type.
This completes the computation of all $H_*(\ul{MSU}_{\;*})$.

\section{Introduction}

In \cite{Peng}, Pengelley constructed a 2-local spectrum, $BoP$, such that the
special unitary cobordism spectrum localized at two, $MSU_{(2)}$, splits as
many copies of various suspensions of $BoP$ and $BP$, the Brown-Peterson spectrum.
To simplify notation, we use $\SBPP$ in place of
$\prod_{a \ge 0} \Sigma^{8a} {BP}$.
Andy Baker gives a stable cofibration
\begin{diag}
\[
\label{stable}
BoP \lra \SBPP \lra \Sigma^2 BoP
\]
\end{diag}

We use homology with $\Zq$-coefficients.
Our first theorem is about the homology of the spaces in the Omega spectrum for $BoP$:

\begin{thm}
\label{firstthm}
For $i \ge 2$, we have a short exact sequence of Hopf algebras:
\[
\Zq \ra H_*(\BoP{i}) \lra H_*(\BPP{i}) \lra H_*(\BoP{i+2}) \ra \Zq
\]
The homology of all three terms is polynomial on even degree generators when $i$ is
even and exterior on odd degree generators when $i$ is odd.  There is no torsion in
the $\Zp$-homology.
\end{thm}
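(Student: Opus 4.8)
The plan is to feed the stable cofibration $BoP \to \SBPP \to \Sigma^2 BoP$ of Diagram~\ref{stable} through the Omega-spectrum functor $\underline{(-)}_{\,i}$ and turn the resulting fibration sequence of infinite loop spaces into a long exact sequence in homology, then argue that it collapses into the short exact sequence claimed. Applying $\Omega^\infty\Sigma^i$ to the cofibration gives, for each $i$, a fibration
\[
\BoP{i} \lra \BPP{i} \lra \BoP{i+2},
\]
and since all the spaces involved are infinite loop spaces (hence simple), the homology of the total space injects/surjects appropriately once we know there is no torsion to create connecting-map trouble. The key input is the known structure of $H_*(\BPP{i})$: by Wilson's computation (\cite{WSWThesis}, \cite{WSWThesis2}) $H_*(\underline{BP}_{\,j})$ is polynomial (exterior) with no higher torsion, and since $\SBPP$ is a product of suspensions of $BP$, $H_*(\BPP{i})$ is a tensor product of such, hence polynomial on even generators for $i$ even, exterior on odd generators for $i$ odd, with no $\Zp$-torsion.

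\smallskip

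The steps, in order, would be: \emph{(1)} Deloop the cofibration to the fibration $\BoP{i} \to \BPP{i} \to \BoP{i+2}$ and set up the Serre spectral sequence (or the Eilenberg--Moore spectral sequence of the fibration), noting that the base $\BPP{i+2}$ is simply connected for $i\ge 0$ so there are no local-coefficient issues. \emph{(2)} Run an induction downward or upward on $i$: using connectivity estimates for $\BoP{i}$ (from Pengelley's computation of $BoP_*$), show that in a range the fibration is ``as split as possible,'' so that $H_*(\BPP{i}) \to H_*(\BoP{i+2})$ is surjective and $H_*(\BoP{i})$ is exactly the ``kernel'' sub-Hopf-algebra. \emph{(3)} Identify the Hopf-algebra structure: a surjection of Hopf algebras from a polynomial (resp. exterior) Hopf algebra whose kernel is again of that type, with the Borel-type structure theorem for connected Hopf algebras over $\Zq$, forces the three-term sequence to be short exact of Hopf algebras and forces $H_*(\BoP{i})$ and $H_*(\BoP{i+2})$ to be polynomial/exterior on generators of the asserted parity. \emph{(4)} Finally, lift the no-torsion statement to $\Zp$-coefficients: since the mod-$2$ homology is free (polynomial/exterior) and concentrated in the right parity, a Bockstein argument shows all Bocksteins vanish, so $H_*(\BoP{i};\Zp)$ is torsion-free.

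\smallskip

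The main obstacle I expect is \emph{(2)}: controlling the Serre (or Eilenberg--Moore) spectral sequence of the fibration and showing there is no room for differentials or extension problems that would destroy the clean short-exactness. The cofibration has a $\Sigma^2$ in it, so the two ``ends'' $\BoP{i}$ and $\BoP{i+2}$ are genuinely different spaces and one cannot simply quote a splitting; the argument must leverage the even/odd parity of generators (which kills differentials for parity reasons in half the cases) together with the explicit additive structure of $H_*(\BPP{i})$ inherited from Wilson's thesis computations, and must handle the boundary case near $i=2$ separately since that is exactly where ``things are only slightly more complicated.'' Establishing that the connecting homomorphism in the homology long exact sequence is zero in the stated range $i \ge 2$ — equivalently that $H_*(\BPP{i}) \to H_*(\BoP{i+2})$ hits every generator — is the crux, and I would prove it by a dimension/Poincar\'e-series count comparing $H_*(\BoP{i})$, $H_*(\BPP{i})$, $H_*(\BoP{i+2})$ against the stable coefficient data, using that the alternating sum of Poincar\'e series of the fiber sequence is forced.
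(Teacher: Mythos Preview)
Your outline has two genuine gaps, and the paper's proof goes a different route on both.

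\textbf{The base case.} You write ``handle the boundary case near $i=2$ separately,'' but this is precisely where all the content lies. The paper's own remark after the statement warns that the $i=2$ short exact sequence ``is a hard won exact sequence that depends on the degree by degree computations for all of the negative spaces.'' Concretely, the paper establishes $i=2$ by invoking the Conner--Floyd splitting $\BoP{0}\simeq \F{0}\times \bo{0}$ (Theorem~\ref{cf66}), then proving Theorem~\ref{xf} --- a parallel short exact sequence $H_*(\F{i})\to H_*(\X{i})\to H_*(\F{i+2})$ for $i\le 6$ --- by its own degree-by-degree induction, and finally tensoring that with the known $bo$/$bu$ sequence via the $3\times 3$ diagram~\ref{onlytwo}. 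None of this comes out of ``connectivity estimates from Pengelley's $BoP_*$''; you need the unstable Conner--Floyd input.

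\textbf{The inductive step and choice of spectral sequence.} You propose the Serre or Eilenberg--Moore spectral sequence of the fibration $\BoP{i}\to\BPP{i}\to\BoP{i+2}$ and an induction on $i$. The difficulty is that in this fibration only the \emph{total space} is known; fiber and base are both the objects you are trying to compute, so neither Serre (needs base and fiber) nor Eilenberg--Moore (needs base and total space) gets started, and your Poincar\'e-series count is circular for the same reason. The paper instead uses the \emph{bar} spectral sequence going from $H_*(\underline{(-)}_{i-1})$ to $H_*(\underline{(-)}_{i})$ (Proposition~\ref{free}): since the short exact sequence at level $i-1$ is split as algebras (exterior or polynomial), applying $\Tor$ preserves short exactness and the spectral sequence collapses in the needed range. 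Crucially this forces an induction on \emph{degree} $j$ simultaneously over all $i\ge 2$, not on $i$; the paper explicitly notes ``This is why we can't just do an induction on $i$ for all degrees at once,'' because proving $H_*(\BoP{i+2})$ is polynomial in degree $j$ requires the sequence at level $i+1$ in degrees $<j$.
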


\begin{remark}
Of course the homology of the middle term, $\BPP{i}$, is well known already, \cite{WSWThesis}.
Because all are either exterior or polynomial, the short exact sequence is split as algebras.
In addition, when $i > 2$ and even, the homology is bipolynomial (i.e. the cohomology is
also polynomial).
There is a temptation to believe that since the middle term is well-known, there must
be some sort of degree-by-degree induction algorithm that allows one to bootstrap the computation of
the homology of the other terms from that.  Regrettably, this is not the case.
The induction starts with the short exact sequence for $i=2$, but that is a hard won
exact sequence that depends on the degree by degree computations for all of the negative
spaces, with an added exotic transition from negative to positive spaces.
\end{remark}

By splicing these short exact sequences together
there is a novel corollary with no obvious use.  Using the composite maps
$\BPP{j} \ra  \BoP{j+2} \ra \BPP{j+2}$, we get

\begin{cor}
For $i \ge 2$ there is a long exact sequence of Hopf algebras
\[
\Zq \ra H_*(\BoP{i}) \lra H_*(\BPP{i}) \lra 
H_*(\BPP{i+2}) 
\lra 
\]
\[
H_*(\BPP{i+4}) 
\lra 
H_*(\BPP{i+6}) 
\lra \cdots
\]
\end{cor}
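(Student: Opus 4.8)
The plan is to obtain the long exact sequence simply by concatenating the short exact sequences of Theorem~\ref{firstthm}; the only thing that needs care is fixing the right notion of exactness and then running a formal diagram chase. For each $j \ge 2$, write the short exact sequence of Theorem~\ref{firstthm} as
\[
\Zq \ra H_*(\BoP{j}) \llra{\alpha_j} H_*(\BPP{j}) \llra{\beta_j} H_*(\BoP{j+2}) \ra \Zq ,
\]
so that $\alpha_j$ is a monomorphism and $\beta_j$ an epimorphism of connected, graded, commutative and cocommutative Hopf algebras of finite type over the field $\Zq$ (by the theorem all three terms are polynomial or exterior). The maps of the long exact sequence are the composites $\gamma_j = \alpha_{j+2} \circ \beta_j \colon H_*(\BPP{j}) \ra H_*(\BoP{j+2}) \ra H_*(\BPP{j+2})$, which are Hopf algebra maps because $\beta_j$ and $\alpha_{j+2}$ are. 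I interpret ``exact at $C$'' along a stretch $B \xrightarrow{f} C \xrightarrow{g} D$ to mean that the image sub-Hopf-algebra $\im f \subseteq C$ equals the Hopf kernel $\ker g := C \,\square_{\,\im g}\, \Zq$, the sub-Hopf-algebra of $\im g$-coinvariants; for a short exact sequence $\Zq \ra A \ra B \ra C \ra \Zq$ in the sense of Theorem~\ref{firstthm} this is exactly the assertion that $A \cong \im(A \ra B) = B \,\square_C\, \Zq$ and $C \cong B // A$.

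Next I would run the chase. Exactness at $H_*(\BoP{i})$ is just injectivity of $\alpha_i$, which is part of Theorem~\ref{firstthm}. At $H_*(\BPP{i})$ the outgoing map is $\gamma_i = \alpha_{i+2} \circ \beta_i$ with $\alpha_{i+2}$ a monomorphism, so $\ker \gamma_i = \ker \beta_i$, and $\ker \beta_i = \im \alpha_i$ by exactness of the short exact sequence at level $i$. For a general interior term $H_*(\BPP{j})$, $j = i + 2k$ with $k \ge 1$, the incoming map is $\gamma_{j-2} = \alpha_j \circ \beta_{j-2}$ with $\beta_{j-2}$ an epimorphism, so $\im \gamma_{j-2} = \im \alpha_j$, while the outgoing map is $\gamma_j = \alpha_{j+2} \circ \beta_j$ with $\alpha_{j+2}$ a monomorphism, so $\ker \gamma_j = \ker \beta_j = \im \alpha_j$ by exactness at level $j$. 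Hence $\im \gamma_{j-2} = \ker \gamma_j$ at every interior spot, and the long sequence is exact.

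The only step with any content --- and the one I expect to be the main obstacle --- is justifying the two cancellation rules used above, namely $\ker(g \circ f) = \ker f$ when $g$ is a monomorphism and $\im(g \circ f) = \im g$ when $f$ is an epimorphism, in the category of connected, graded, commutative and cocommutative Hopf algebras of finite type over $\Zq$, with ``kernel'' and ``image'' taken in the Hopf sense above. Here I would invoke the standard structure theory (Borel, Milnor--Moore): a map of such Hopf algebras factors uniquely as an epimorphism followed by a monomorphism, epimorphism means surjective and monomorphism means injective, sub-Hopf-algebras and quotient Hopf algebras of polynomial or exterior Hopf algebras are again polynomial or exterior, and the faithfully-flat description of short exact sequences makes the comodule structures used to form $C \,\square_{\,\im g}\, \Zq$ behave well under these factorizations. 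Since every term in sight is of exactly this type by Theorem~\ref{firstthm}, no further analysis is needed. I would also remark that the long exact sequence is realized by genuine maps of infinite loop spaces $\BPP{i} \ra \BPP{i+2} \ra \BPP{i+4} \ra \cdots$, the composites of the space-level maps $\BPP{j} \ra \BoP{j+2} \ra \BPP{j+2}$ associated with Baker's stable cofibration $BoP \ra \SBPP \ra \Sigma^2 BoP$, although this geometric observation is not needed for the statement.
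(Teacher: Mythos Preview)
Your proposal is correct and is exactly the approach the paper takes: the corollary is obtained by splicing the short exact sequences of Theorem~\ref{firstthm} via the composites $\BPP{j}\to\BoP{j+2}\to\BPP{j+2}$, and the paper says no more than that. Your careful articulation of Hopf-algebra exactness and of the cancellation rules $\ker(g\circ f)=\ker f$ for $g$ mono and $\im(g\circ f)=\im g$ for $f$ epi is more detail than the paper provides, but it is the right detail and nothing is missing.
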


\begin{remark}
The object of interest to us is thus the zeroth homology of a chain complex
of well understood Hopf algebras.  
Unfortunately, the unstable maps here are not at all understood.
Stably, in (co)-homology, they are easy to see as everything is
given by sums of cyclic modules over the Steenrod algebra.
The actual stable map $r \colon \SBPP \ra \Sigma^2 \SBPP$ 
has the property that it covers $Sq^2$ on each copy of $BP$ and has
$r^2 = 0$.  This might well be enough to determine $r$, but it
is not to be messed with lightly.  Furthermore, it is unlikely
to give insight into the computation of the unstable maps with the
usual generators.
\end{remark}

Pengelley constructed a fibration $F \ra BoP \ra bo$ 
that gives a short exact sequence in homotopy.  It also has 
the property that
$bo$ carries all of the torsion homotopy of $BoP$ leaving $F$ with no torsion
in homotopy.

There is an old theorem of Conner and Floyd, \cite[Corollary 9.6, page 58]{CF}, 
that says (really for $MSU$, but essentially):
\begin{equ}
\[
\label{cf}
\BoP{0} \simeq \F{0} \times \bo{0}.
\]
\end{equ}
Notation is much simpler if we just assume that we are taking the 2-local
version of $bo$ and $\bo{i}$ throughout this paper.

We use this to get the following theorem covering the homology of the negative spaces:

\begin{thm}
\label{secthm}
For $i < 6$, we have 
\[
H_*(\BoP{i}) \simeq H_*(\F{i})\otimes H_*(\bo{i})
\]
where $H_*(\bo{i})$ is well known and $H_*(\F{i})$ is polynomial on even degree elements
when $i$ is even
and exterior on odd degree elements when $i$ is odd.  There is no torsion in the $\Zp$-homology
of $\F{i}$.
\end{thm}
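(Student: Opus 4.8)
The plan is to bootstrap off the Conner--Floyd splitting~\eqref{cf} and propagate it up through the low-degree spaces using the fibration $F \ra BoP \ra bo$ together with its delooped versions $\F{i} \ra \BoP{i} \ra \bo{i}$. Since the fibration gives a short exact sequence in homotopy in which $bo$ carries all of the $2$-torsion, the spectrum $F$ is torsion-free in homotopy, so each $\F{i}$ is an $H$-space whose homotopy is torsion-free; this is what will eventually force $H_*(\F{i})$ to be polynomial (even $i$) or exterior (odd $i$) with no $\Zp$-torsion. First I would record that~\eqref{cf} splits $\BoP{0}$, and then ask to what extent the splitting deloops. The key observation is that the composite $\bo{i} \to \BoP{i}$ that one wants (a section of $\BoP{i}\to\bo{i}$) need not exist stably, but~\eqref{cf} provides it when $i=0$, and one can try to deloop finitely many times. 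The range $i<6$ is exactly the range in which the relevant obstruction group --- living in the homotopy of $F$, which begins in a degree that makes it vanish below dimension $6$ or so --- is zero, so the splitting persists.

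Concretely, the key steps, in order, are: (1) From~\eqref{cf}, get $H_*(\BoP{0})\iso H_*(\F{0})\tens H_*(\bo{0})$ and identify $H_*(\F{0})$ as the torsion-free polynomial part by comparing with the known $H_*(\bo{0})$ (the torsion in $H_*(\BoP{0})$ all comes from the $bo$ factor). (2) Use the fibration sequence of infinite loop spaces $\F{i}\to\BoP{i}\to\bo{i}$ together with the bar spectral sequence (or the Eilenberg--Moore spectral sequence) relating the homologies of adjacent deloopings; because $H_*(\F{i})$ is either polynomial on even generators or exterior on odd generators, these spectral sequences collapse and one gets $H_*(\F{i+1})$ with the same structural properties, provided the extension $\F{i}\to\BoP{i}\to\bo{i}$ actually stays split. (3) Show that the splitting deloops for $i<6$: the obstruction to delooping a map $\bo{i}\to\BoP{i}$ one more time lies in $[\Sigma^{-1}\bo, \F]$-type groups (or an unstable analogue), and in the range $i<6$ the relevant homotopy of $F$ vanishes, or more precisely the first nonzero homotopy of $F$ above the range of $bo$'s low cells doesn't interfere; so $\BoP{i}\simeq \F{i}\times\bo{i}$ for $i<6$, whence the tensor decomposition of homology. (4) Feed this back: with the product decomposition in hand, $H_*(\BoP{i})\iso H_*(\F{i})\tens H_*(\bo{i})$ as Hopf algebras, and the stated structure of $H_*(\F{i})$ follows from torsion-freeness of $\pi_*F$ plus the general structure theory of Hopf algebras over $\Zq$ (connected, commutative, cocommutative, with the even/odd pattern dictated by the suspension degree).

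The main obstacle is step~(3): one must be careful about precisely which obstruction group controls the delooping of the Conner--Floyd splitting and verify it vanishes exactly for $i<6$ --- this is where the numerical bound $6$ enters, and it presumably reflects the bottom class of the spectrum $F$ (the first homotopy of $BoP$ not already visible in $bo$), together with the fact that $\Sigma^8 BP$ and higher summands only start contributing in degree $\ge 8$. A secondary subtlety is ensuring the spectral sequence in step~(2) genuinely collapses and that the resulting extensions of Hopf algebras are the expected ones; this should follow formally once one knows both $H_*(\F{i})$ and $H_*(\bo{i})$ are of the right (polynomial or exterior) type, since there is no room for differentials or exotic multiplicative extensions between even- and odd-degree generators. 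I would also double-check that the torsion in $H_*(\BoP{i})$ for $i<6$ is entirely accounted for by the $bo$ factor, using the homotopy short exact sequence coming from $F\to BoP\to bo$, so that $H_*(\F{i})$ is genuinely $\Zp$-torsion-free as claimed.
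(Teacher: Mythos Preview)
Your overall shape --- leverage Conner--Floyd at $i=0$ and propagate upward --- is right in spirit, but there are two genuine gaps. First, your step~(2) runs the bar spectral sequence on $\F{i-1}$ to reach $\F{i}$; when $i$ is even this only produces a divided power algebra at $E^\infty$, and you must \emph{solve} the multiplicative extension to conclude polynomial. Nothing in your outline does that, and ``torsion-free homotopy plus Hopf algebra structure theory'' is not enough by itself. Second, your step~(3) obstruction argument is not actually carried out: the fiber $F$ is indeed $5$-connected (first homotopy in degree~$6$), but the obstructions to a section of $\F{i}\to\BoP{i}\to\bo{i}$ live in groups like $H^{*}(\bo{i};\pi_*\F{i})$, which are not zero in the relevant range --- so ``$F$ starts in degree~$6$'' does not by itself force the section to deloop. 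In fact the paper obtains the space-level splitting $\BoP{i}\simeq\F{i}\times\bo{i}$ for $0<i\le 6$ only \emph{after} the homology is computed, as a corollary of Theorem~\ref{bosplit}; it is not an input to Theorem~\ref{secthm}.

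The ingredient you are missing is the auxiliary spectrum $X$ (the fiber of $\SBPP\to bu$) and the cofibration $F\to X\to\Sigma^2 F$ extracted from Diagram~\ref{bigone}. The point is that $H_*(\X{i})$ is already known for $i\le 6$ from the splitting $\BPP{i}\simeq\X{i}\times\bu{i}$ (this splitting \emph{does} hold in this range, from \cite{WSWThesis2}), and it is polynomial on even generators or exterior on odd ones. The paper then runs a \emph{simultaneous} degree-by-degree induction over all $i$ on the short exact sequence $H_*(\F{i})\to H_*(\X{i})\to H_*(\F{i+2})$ (Theorem~\ref{xf}): the injection of $H_*(\F{i})$ into the known polynomial $H_*(\X{i})$ is exactly what resolves the divided-power-versus-polynomial extension in the bar spectral sequence. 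Once $H_*(\F{i})$ is pinned down this way, the tensor decomposition $H_*(\BoP{i})\simeq H_*(\F{i})\otimes H_*(\bo{i})$ follows from collapse of the Serre (Atiyah--Hirzebruch) spectral sequence for $\F{i}\to\BoP{i}\to\bo{i}$, with no space-level splitting required.
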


We need some background in order to state our final theorem 
about homotopy type, but first some notation.  Let $\sA$ be the
mod 2 Steenrod algebra and $Q_k$ the Milnor primitives.
Define $\sA(k) = \sA/\sA(Q_0,Q_1,\ldots,Q_k)$ and
$\sA(2,k) = \sA/\sA(Q_0,Sq^2,Q_1,\ldots,Q_k)$.

\begin{thm}[\cite{WSWThesis2}, rephrased from cohomology to homology]
\label{wsw2}
\begin{enumerate}[leftmargin=2em] 
\ 
\item There exists a unique, up to homotopy,  irreducible $(k-1)$-connected H-space $Y_k$
which has $H_*(Y_k;\Zp)$ and $\pi_*(Y_k)$ free over $\Zp$.
\item If $Z$ is an H-space with $H_*(Z;\Zp)$ and $\pi_*(Z)$ free over
$\Zp$, then $Z \simeq \prod_i Y_{k_i}$.
\item There are spectra 
$BP\langle n \rangle$ 
with 
$H^*( BP\langle n \rangle) = \sA(n)$, 
$BP\langle n \rangle_* \simeq \Zp[v_1,v_2,\ldots,v_n]$ and $|v_i| = 2(2^i-1)$.
\item For $2^{j+1} -2 < k \le 2^{j+2} -2$, 
\ \ 
$Y_k
\simeq 
\BPn{j}{k}$.
\item
$\BPn{j}{2^{j+1}-2} 
\simeq 
\BPn{j-1}{2^{j+1}-2} 
\times
\BPn{j}{2^{j+2}-4} $.
\end{enumerate}
\end{thm}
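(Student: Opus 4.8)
The plan is to regard (1)--(3) as the assembly of standard structure and to concentrate the real work on (4) and (5). For (3) I would realize $BP\langle n\rangle$ by the standard construction killing the classes $v_{n+1},v_{n+2},\dots$ from $BP$, so that $BP\langle n\rangle_* = \Zp[v_1,\dots,v_n]$ by construction, while $H^*(BP\langle n\rangle;\Zq)=\sA(n)$ follows from the cell structure over $BP$ --- equivalently an induction along the cofibre sequences below --- together with the known action of the Milnor primitives. The recursive device throughout is the cofibre sequence $\Sigma^{2(2^n-1)}BP\langle n\rangle \xrightarrow{\,v_n\,} BP\langle n\rangle \to BP\langle n-1\rangle$ and its delooped, space-level analogue.

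For (1) and (2) I would argue by obstruction theory along Postnikov towers. If $Z$ is an H-space with $\pi_*(Z)$ and $H_*(Z;\Zp)$ free over $\Zp$, then $H^*(Z;\Zq)$ is a commutative Hopf algebra with no truncated-polynomial summands, hence by Borel a free graded-commutative algebra (and similarly rationally), so dually $H_*(Z;\Zp)$ is polynomial on even generators and exterior on odd ones with no torsion. The key lemma I would isolate is a rigidity statement: among H-spaces all of whose homotopy and $\Zp$-homology is free, the $k$-invariants of the Postnikov tower are forced, because the pertinent obstruction groups are computed from $H^*(-;\Zp)$ and $H^*(-;\Zq)$ of products of Eilenberg--MacLane spectra and the double freeness kills the indeterminacy. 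Granting this, I would decompose the $\sA$-module of indecomposables of $H^*(Z;\Zq)$ into cyclic summands --- each a shifted $\sA(n_i)$ --- realize each summand by an H-space that is again suitably connected with free homology and homotopy, and use the same rigidity to see such a piece admits no proper H-space splitting; this produces the irreducibles $Y_{k_i}$, the decomposition $Z\simeq\prod_i Y_{k_i}$, and the uniqueness of $Y_k$.

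For (4) and (5) I would first compute $H_*(\BPn{n}{*};\Zq)$ as a Hopf ring, by induction on $n$ from the known Hopf ring $H_*(\ul{BP}_{\;*};\Zq)$ of \cite{WSWThesis} and the bar spectral sequence for $\Omega$ along the cofibre sequences above; this simultaneously determines $H_*(\BPn{n}{k};\Zp)$ and locates exactly where it is torsion free, while $\pi_*(\BPn{n}{k}) = \bigl(\Zp[v_1,\dots,v_n]\bigr)_{*-k}$ is free over $\Zp$ for every $k$ on the nose. For $2^{j+1}-2 < k \le 2^{j+2}-2$ the calculation shows $\BPn{j}{k}$ is $(k-1)$-connected with torsion-free homology and with $H^*(\BPn{j}{k};\Zq)$ indecomposable over $\sA$, so part (2) forces a single factor and the connectivity identifies it as $Y_k$, giving (4). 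At the endpoint $k = 2^{j+1}-2$ the still torsion-free homology instead decomposes as a tensor product of exactly two $\sA$-indecomposable pieces, with bottom classes in degrees $2^{j+1}-2$ and $2^{j+2}-4$, the second coming from the $v_j$-class of degree $2(2^j-1)=k$; matching connectivities against (4) --- with index $j-1$ for the first piece and index $j$ for the second, both now in their open ranges --- identifies the factors as $\BPn{j-1}{2^{j+1}-2}$ and $\BPn{j}{2^{j+2}-4}$, which is (5).

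The main obstacle is the rigidity lemma behind (1) and (2): controlling every $k$-invariant of an arbitrary H-space with free homology and homotopy, and thereby showing that the $\sA$-module $H^*(-;\Zq)$ with its integral Bockstein data is a complete invariant. Once that is in hand, (4) and (5) are bookkeeping on top of the Hopf-ring calculation, whose only delicate point is locating the disappearance of torsion precisely at $k = 2^{j+1}-2$ and recognizing the two-summand splitting that occurs exactly there.
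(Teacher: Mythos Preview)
This theorem is not proved in the present paper; it is quoted from \cite{WSWThesis2} as background, so there is no proof here to compare your proposal against. The paper does, however, reveal the essential mechanism when it invokes the result in the proof of Theorem~\ref{bosplit}: for an H-space $Z$ with torsion-free $\Zp$-homology, the $k$-invariants of its Postnikov tower are torsion (this holds for any H-space) and hence vanish, so any map $Z \to K(\Zp,k)$ lifts all the way to $Z \to Y_k$; choosing such lifts for a basis of the rational homotopy yields the product decomposition in (2), and irreducibility of $Y_k$ follows because a single bottom homotopy generator leaves no room to split. Your ``rigidity lemma'' is exactly this $k$-invariant vanishing, so your outline for (1) and (2) is correct in substance, though the argument in \cite{WSWThesis2} organizes the decomposition by homotopy generators and lifted cohomology classes rather than by cyclic $\sA$-summands of the cohomology indecomposables. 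Your plan for (3)--(5) via the cofibre tower $\Sigma^{2(2^n-1)}BP\langle n\rangle \to BP\langle n\rangle \to BP\langle n-1\rangle$ together with the Hopf-ring calculations of \cite{WSWThesis} and \cite{RW:HR} is the standard route and matches the literature.
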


\begin{thm}
\label{bosplit}
There is an irreducible splitting, not as H-spaces:
\[
\begin{array}{rcccll}
\BoP{6}&\simeq &
\bo{6}& \times &
\prod_{u \ge 0} &  Y_{8u+12} \\
& \simeq &
\bo{6}  &\times &
\prod_{2^{k-2} > u  \ge 0} &  Y_{2^{k+1}+8u+4} \\
& \simeq &
\bo{6}  &\times &
\prod_{2^{k-2} > u  \ge 0}& \BPn{k}{2^{k+1}+8u+4}
\end{array}
\]
\end{thm}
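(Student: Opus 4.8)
The plan is to show that Pengelley's fibration $F \to BoP \to bo$ becomes trivial on $6$th spaces, then to identify the fibre factor via Theorem \ref{wsw2} and to read off which Wilson spaces occur. Extending the fibration gives $bo \xrightarrow{\delta} \Sigma F$ with $BoP = \mathrm{fib}(\delta)$, so after applying $\Omega^\infty\Sigma^6$ the space $\BoP{6}$ is the homotopy fibre of a map $\delta_6\colon \bo{6}\to\F{7}$; if $\delta_6\simeq 0$ then $\BoP{6}\simeq\bo{6}\times\Omega\F{7}=\bo{6}\times\F{6}$. So the crux is the nullity of $\delta_6$, equivalently the existence of a section of $\BoP{6}\to\bo{6}$.

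To prove $\delta_6\simeq 0$ I would use connectivity. Since $bo$ carries all the torsion homotopy of $BoP$, the groups $\pi_*(F)=\ker(BoP_*\to bo_*)$ are torsion-free and explicitly computable from the known $BoP_*$ and $bo_*$; one finds $F$ is $5$-connected with $F_6\cong\Zp$, so $\Sigma F$ is $6$-connected, $\F{6}$ is $11$-connected, and $\F{7}$ is $12$-connected. The spectrum map $\delta$ is nonzero (else $BoP$ would split off $bo$, against Pengelley), but its nonzero Postnikov components all lie in degrees $\ge 7$; the components of $\delta_6$, in $H^{k+6}(\bo{6};A)$ for $k\ge 7$, are the destabilizations of those of $\delta$, and these land beyond the stable range of $\bo{6}$ (degrees $\le 11$). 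Using $H^*(bo;\Zq)=\sA//\sA(1)$ and the vanishing of $Sq^1,Sq^2$ on the fundamental class of $\bo{6}$ one checks they all die there — equivalently, that the obstructions to a section of $\BoP{6}\to\bo{6}$, in $H^{n+1}(\bo{6};\pi_n\F{6})$ for $n\ge 12$, vanish. This is sharp: for $i\ge 8$ the degree-$7$ component of $\delta$ destabilizes \emph{into} the stable range of $\bo{i}$, where it survives, so $\BoP{i}\not\simeq\bo{i}\times\F{i}$ — which is why the homotopy type can only be conjectured for $i>6$. I expect making the vanishing at $i=6$ precise to be the main topological obstacle.

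Given $\BoP{6}\simeq\bo{6}\times\F{6}$, Künneth together with the absence of $\Zp$-torsion in $H_*(\BoP{6})$ (Theorem \ref{firstthm}) forces $H_*(\F{6};\Zp)$ to be torsion-free, and $\pi_*(\F{6})=F_{*-6}$ is free over $\Zp$; hence $\F{6}$ satisfies the hypotheses of Theorem \ref{wsw2}(2), so $\F{6}\simeq\prod_i Y_{k_i}$. To pin down the indices I would match the known graded group $F_{*-6}$ against $\bigoplus_i\pi_*(Y_{k_i})$, peeling off one Wilson space at a time from the bottom — so that $k=12$ occurs once (since $F_6\cong\Zp$), the next surviving bottom class is in degree $20$, and so on — or, equivalently, match Poincaré series: $P\bigl(H_*(\F{6};\Zq)\bigr)=P\bigl(H_*(\BoP{6};\Zq)\bigr)/P\bigl(H_*(\bo{6};\Zq)\bigr)$, where $P\bigl(H_*(\BoP{6};\Zq)\bigr)$ is the convergent alternating product $\prod_{m\ge 0}P\bigl(H_*(\BPP{6+2m};\Zq)\bigr)^{(-1)^m}$ obtained by splicing the short exact sequences of Theorem \ref{firstthm} and inserting the known series of the $H_*(\underline{BP}_{\;k};\Zq)$, to be compared with $\prod_u P\bigl(H_*(Y_{8u+12};\Zq)\bigr)$. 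Either way one finds the indices are precisely $\{8u+12:u\ge 0\}$, each occurring once; this identification is the computational heart of the argument.

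Finally, the last two displayed lines are elementary arithmetic based on Theorem \ref{wsw2}(4): for each $k\ge 2$, as $u$ ranges over $0\le u<2^{k-2}$ the value $2^{k+1}+8u+4$ ranges over exactly the integers $8v+12$ lying in $(2^{k+1}-2,\,2^{k+2}-2]$, and these intervals partition $\{8v+12:v\ge 0\}$; since each such integer is strictly interior to its interval, $Y_{8v+12}\simeq\BPn{k}{8v+12}$ by Theorem \ref{wsw2}(4), while the middle line is just the substitution $8u+12=2^{k+1}+8v+4$. The splitting is irreducible — each $Y_{8u+12}$ by Theorem \ref{wsw2}(1), and $\bo{6}$ by the known structure of the spaces in the $bo$-spectrum — but it is not a splitting of H-spaces, since the short exact sequence of Hopf algebras $\Zq\to H_*(\F{6})\to H_*(\BoP{6})\to H_*(\bo{6})\to\Zq$, although split as algebras, does not split as coalgebras. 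I expect the two genuine obstacles to be establishing $\delta_6\simeq0$ and carrying out the Poincaré-series identification.
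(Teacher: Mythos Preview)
Your overall architecture is right, but the paper's proof runs in the opposite direction from yours at the crucial step, and this lets it avoid precisely the obstacle you flag.

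You try to split the fibration by producing a section $\bo{6}\to\BoP{6}$, i.e.\ by showing $\delta_6\simeq 0$. You concede this is ``the main topological obstacle'' and offer only a sketch (stable range, $Sq^1,Sq^2$ on the fundamental class). The paper never attempts this. Instead it builds the equivalence by mapping \emph{out} of $\BoP{6}$. First, Theorem~\ref{xf} already gives $H_*(\F{6};\Zp)$ torsion free (independently of any splitting), so Theorem~\ref{wsw2}(2) identifies $\F{6}\simeq\prod Y_{k_i}$ straightaway. The generating cohomology classes in $H^{k_i}(\F{6};\Zp)$ lift to $H^{k_i}(\BoP{6};\Zp)$ via the short exact sequence of Remark~\ref{interest}. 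Now comes the point that replaces your obstruction argument: the $k$-invariants of each $Y_{k_i}$ are torsion (they are $k$-invariants of an H-space), while $H^*(\BoP{6};\Zp)$ is torsion free by Theorem~\ref{firstthm}; hence every such class lifts through the Postnikov tower to a genuine map $\BoP{6}\to Y_{k_i}$, despite the $2$-torsion in $\pi_*(\BoP{6})$. Together with the given map to $\bo{6}$ this yields $\BoP{6}\to\bo{6}\times\prod Y_{k_i}$, which one then checks is a $\pi_*$-isomorphism. So the torsion-freeness of $H^*(\BoP{6})$ is what does the work, not any obstruction calculation on $\bo{6}$.

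A couple of smaller points. Your K\"unneth deduction of torsion-freeness for $H_*(\F{6};\Zp)$ is unnecessary in the paper's logic (it is already Theorem~\ref{xf}), and in any case it presupposes the splitting you are trying to establish. The identification of the indices $\{8u+12\}$ is done in the paper by a rational homotopy Poincar\'e-series identity for $BoP_*$ versus $bo_*\oplus\bigoplus\Sigma^{2^{k+1}+8u-2}\SBPn{k}_*$; your homology-series proposal would work too but is more roundabout. Finally, your last sentence is inverted: the short exact sequence of Hopf algebras \emph{does} split as coalgebras (any product decomposition of spaces gives that via K\"unneth) but \emph{not} as algebras, since $H_*(\bo{6})$ is exterior while $H_*(\BoP{6})$ is polynomial, so there is no algebra section---this is exactly why the splitting cannot be as H-spaces.
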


\begin{remark}
Looping down this splitting, we still have a splitting.  
After a few loops, it ceases to be irreducible.  
However, using theorem \ref{wsw2}[5.], the irreducible splittings
can be computed for as many loops as you want.  This is
a bad habit to get into though.
The awkward notation is a result of there being one 
$\SBPn{2}$,
two
$\SBPn{3}$,
four 
$\SBPn{4}$,
eight
$\SBPn{5}$, etc.
All as a result of theorem \ref{wsw2}[4.].
\end{remark}

Now to the speculative part of the paper.  

\begin{conj}
\label{conj}
There are spectra, $\SBoPn{n}$, with $\SBoPn{1} = bo$, with irreducible splittings
\[
\BoP{8n-2} \simeq 
\BoPn{n}{8n-2} 
\times \prod_{2^{k-2} -n + 1 > u \ge 0} 
\BPn{k}{2^{k+1} + 8(n+u) -4}
\]
\[
\BoPn{n+1}{8n-2} \simeq \BoPn{n}{8n-2} \times \prod_{2^{k-2}  \ge  n } 
\BPn{k}{2^{k+2}  -4}
\]
There are spectra, 
$\SBPPn{n}$, 
with $\SBPPn{1} = bu$, such that we have a stable
cofibration
\[
\SBoPn{n} \lra \SBPPn{n} \lra \Sigma^2 \SBoPn{n}
\]
inducing a short exact sequence on (co)homology.

Let $n > 2$.  Write $n = 2^K + a + 1$ with $0\le a < 2^K$.  Then
\[
H^*(\SBoPn{n}) \simeq
\bigoplus_{s=1}^{n-1}
\quad
\bigSigma_{K' \ge K}^{2^{K'+3+\epsilon} -8s} A(2,K'+2+\epsilon)
\]
and
\[
\SBPPn{n} \simeq
\bigWedge_{s=1}^{n-1}
\quad
\bigWedge_{K' \ge K}^{2^{K'+3+\epsilon} -8s} BP\langle K'+2+\epsilon \rangle
\]
where $\epsilon = 1$ if $0 < s \le a $ or $2^K + 1 = n-a \le s < n$ and $\epsilon = 0$ if
$a < s < n-a =2^K+1$.
\end{conj}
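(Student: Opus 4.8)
The plan is to prove Conjecture~\ref{conj} by induction on $n$, the base case $n=1$ being Theorem~\ref{bosplit} together with its stable counterpart, the classical cofibration $bo \xrightarrow{\;c\;} bu \to \Sigma^2 bo$ (this is the $n=1$ instance of the asserted cofibration $\SBoPn{n}\to\SBPPn{n}\to\Sigma^2\SBoPn{n}$, with $\SBoPn{1}=bo$ and $\SBPPn{1}=bu=BP\langle 1\rangle$; Baker's Diagram~\ref{stable} is the $n=\infty$ limit). The first task is to \emph{construct} the spectra $\SBoPn{n}$ and $\SBPPn{n}$. I would do this by running Pengelley's $bo$-resolution of $MSU$ \cite{Peng} one stage at a time: let $\SBPPn{n}$ be the partial wedge assembled from the first $n$ families of $BP\langle m\rangle$-summands predicted by the doubling pattern of Theorem~\ref{wsw2}(4)--(5), and let $\SBoPn{n}$ be the fiber of the resulting truncated degree-$2$ self-map $\SBPPn{n}\to\Sigma^2\SBPPn{n}$, so that the stages fit into the cofibration $\SBoPn{n}\to\SBPPn{n}\to\Sigma^2\SBoPn{n}$ and stabilize to Diagram~\ref{stable}. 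That this cofibration induces a short exact sequence in $\Zq$-cohomology is then forced stagewise by the two properties of the stable map $r$ recorded in the remark after the Corollary: it covers $Sq^2$ on each $BP$-summand and satisfies $r^2=0$.

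Granting the cofibration, the cohomology formula for $\SBoPn{n}$ is Steenrod-algebra bookkeeping. The key input is that $H^*(BP\langle m\rangle)=\sA(m)$ carries, via the relevant block of $r$, a square-zero endomorphism of degree $2$ behaving like ``$Sq^2$'': its image is a copy of $\Sigma^2\sA(2,m)$ and its cokernel a copy of $\sA(2,m)$, so $\sA(m)$ is a two-step extension of $\sA(2,m)$ by $\Sigma^2\sA(2,m)$ (for $m=1$ this is exactly $H^*(bu)$ presented as an extension of $H^*(bo)$ by $\Sigma^2 H^*(bo)$, coming from $bo\to bu\to\Sigma^2 bo$). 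Feeding the direct sum of suspended copies of $\sA(K'+2+\epsilon)$ that makes up $H^*(\SBPPn{n})$ into the short exact sequence $0\to \Sigma^2 H^*(\SBoPn{n})\to H^*(\SBPPn{n})\to H^*(\SBoPn{n})\to 0$ and reading off $H^*(\SBoPn{n})$ then produces $\bigoplus_{s=1}^{n-1}\bigSigma_{K'\ge K}^{2^{K'+3+\epsilon}-8s}\sA(2,K'+2+\epsilon)$; the real content is checking that the combinatorics of which $BP\langle m\rangle$-summand the map $r$ sends to which --- governed by the one $\SBPn{2}$, two $\SBPn{3}$, four $\SBPn{4}$, \dots\ of Theorem~\ref{wsw2}(4) --- produces exactly the index ranges $K'\ge K$, $1\le s\le n-1$ and the $\epsilon=0/1$ dichotomy with $n=2^K+a+1$. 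A useful self-consistency check: letting $n\to\infty$ must recover $H^*(BoP)$ as computed from Theorem~\ref{firstthm}.

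For the unstable splittings of $\BoP{8n-2}$ I would separate the torsion-free factors from the single indecomposable torsion-carrying factor. Localized at $2$ there is no torsion in $H_*(\BoP{8n-2})$, and by Pengelley's fibration $F\to BoP\to bo$ all torsion homotopy lives in the $bo$-part; granting the product decomposition, its torsion-free factor is therefore an H-space with homology and homotopy free over $\Zp$ and hence, by Theorem~\ref{wsw2}(2), a product of the irreducible spaces $Y_k$. Their indices are pinned down by the degrees of the polynomial generators of $H_*(\BoP{8n-2})$ surviving to the torsion-free part --- enumerable from the exact sequence of Theorem~\ref{firstthm} --- matched against $Y_k\simeq\BPn{j}{k}$ by Theorem~\ref{wsw2}(4), yielding the factors $\BPn{k}{2^{k+1}+8(n+u)-4}$ with $2^{k-2}-n+1>u\ge 0$. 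What remains is $\BoPn{n}{8n-2}$, the degree-$(8n-2)$ space of the spectrum $\SBoPn{n}$, which carries the $\bo{8n-2}$; and the second splitting $\BoPn{n+1}{8n-2}\simeq\BoPn{n}{8n-2}\times\prod_{2^{k-2}\ge n}\BPn{k}{2^{k+2}-4}$ is the unstable shadow of a stable splitting of $\SBoPn{n+1}$ valid in a range of dimensions, exactly parallel to the range-splitting $\BPn{j}{2^{j+1}-2}\simeq\BPn{j-1}{2^{j+1}-2}\times\BPn{j}{2^{j+2}-4}$ of Theorem~\ref{wsw2}(5) and proved the same way, by comparing $\sA$-module structure (equivalently Poincar\'e series) below the first dimension where a new $v_i$-tower or $Q_i$-tower intrudes.

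The main obstacle is the passage ``from the known homology to the homotopy type.'' As the remark after the Corollary stresses, the unstable maps $\BPP{i}\to\BoP{i+2}$ in the long exact sequence of Hopf algebras are not understood, so possessing $H_*(\BoP{8n-2})$ as a Hopf algebra does not by itself yield a product decomposition of the space. I see two possible routes, and making either one work is, I expect, essentially the whole difficulty --- which is why the statement is only a conjecture. The first is to pin the unstable maps down from the constraints that they cover $r$ stably (hence $Sq^2$ on each $BP$-summand), square to zero, and commute with the Dyer--Lashof operations, and then read off the attaching data of $\BoP{8n-2}$ directly. The second is to upgrade the rigidity of Theorem~\ref{wsw2}(2) to a recognition principle for the irreducible spaces $\BoPn{n}{i}$ --- H-spaces whose torsion is confined to a prescribed $bo$-type summand --- so that the homology computation forces the homotopy type; establishing such a recognition theorem in the presence of torsion is exactly the hard extension of \cite{WSWThesis2} that is currently missing.
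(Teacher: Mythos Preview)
The statement you are attempting to prove is labeled a \emph{Conjecture} in the paper, and the paper does not prove it. Sections~\ref{secconj} and~\ref{secconj2} are titled ``Discussion of the conjecture'' and give only heuristic evidence: the author illustrates the $n=1$ case by \emph{conjecturing} (not proving) where the squares $x_{2j}^{*2}\in H_*(\bo{6})$ land among the $H_*(\BPn{k}{2^{k+2}-4})$ using the Hopf ring generators of \cite{RW:HR}, and then observes that the cohomology formula for $\SBoPn{n}$ is what one would get from a ``hypothetical inductive (on $n$) computation based on the conjectured unstable splitting.'' The consistency check you mention --- that $H^*(\SBoPn{n})\to H^*(BoP)$ as $n\to\infty$ --- is exactly the one the paper records. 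So there is no paper proof to compare against; your proposal is an outline of how one \emph{might} attack the conjecture, and you yourself flag the essential gap at the end.

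That gap is real and is precisely why this remains open. Your construction of $\SBoPn{n}$ as the fiber of a ``truncated degree-$2$ self-map'' of $\SBPPn{n}$ presupposes that the stable map $r\colon\SBPP\to\Sigma^2\SBPP$ respects the proposed filtration by the $\SBPPn{n}$; the paper's remark after the Corollary warns that $r$ ``is not to be messed with lightly,'' and nothing in the paper establishes such compatibility. More seriously, the unstable splittings require control over the \emph{unstable} maps $\BPP{i}\to\BoP{i+2}$, which the same remark says ``are not at all understood.'' Your two proposed routes --- pinning down these maps via $Sq^2$, $r^2=0$, and Dyer--Lashof operations, or proving a torsion-tolerant recognition principle extending Theorem~\ref{wsw2}(2) --- are reasonable strategies, and the second is essentially the Slack conjecture the paper mentions in the remark following Conjecture~\ref{conj}. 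But neither route is carried out, by you or by the paper. The paper's own line of evidence, via explicit Hopf ring computations of where squares land, is a third and somewhat different angle that you do not pursue.
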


\begin{remark}
Particular thanks to Andy Baker for the cofibration \ref{stable}.  Thanks also to
Dave Johnson, David Pengelley, Vitaly Lorman, Vladimir Verchinine, and the referee.
Because of my interest in cobordism, I have always wanted to study Pengelley's $BoP$ but never found
the time.  I was inspired by a vague recollection of a 2001 email from 
Mike Slack suggesting that there is no torsion in 
spaces in the Omega spectrum.  After proving these results, I went back and reviewed my emails from 
Mike
and found much more than that.  
I discovered a
1995 email that used the same notation 
$\SBoPn{n}$.
The email does not specify the detail in the conjecture \ref{conj}, but
went further in one sense.  In \cite{Slack}, Slack proved that if
an infinite loop space had no odd torsion in its integral homology, 
then it also had no odd torsion in homotopy.  This did not hold
for $p=2$ as is illustrated by the spaces like $\ul{bo}_{\; 2}$,
and now $\ul{BoP}_{\;k}$, $k \ge 2$.
However, Michael Slack conjectured that you could classify all
infinite loop spaces with two-primary torsion-free homology in
terms of products of the spaces
$\BoPn{n}{k} $
that are irreducible and torsion-free in homology.
He left mathematics before he completed his proofs and wrote things up.  If he hadn't left
mathematics, I'm sure everything in this paper, and more, would have been done by Michael Slack 15 years
ago.
\end{remark}

In section 
\ref{ha-bss}, 
we do a quick review of what we need about Hopf algebras and 
the bar spectral sequence.
We review what we need about $BoP$ 
in section 
\ref{rev-bop}.
In section 
\ref{fi} 
we do our computation for the spaces $\F{i}$, $i < 6$, and
prove theorem \ref{secthm}. In
section 
\ref{hbop}, 
we move on to $\BoP{i}$, $i \ge 2$ and prove theorem \ref{firstthm}.
We prove the splitting, theorem \ref{bosplit}, in section \ref{secbosplit}
and discuss the conjecture \ref{conj} in the last two sections.

\section{Hopf algebras and the bar spectral sequence}
\label{ha-bss} 

All of the spaces we deal with are spaces in Omega spectra, i.e. $ Z = \{\ZZ{i}\}$ with
$\Omega \ZZ{i+1} \simeq \ZZ{i}$.  The spectra are all connective, i.e. $\pi_*^S(Z) = 0$
for $* < 0$.

As a result, the mod 2 homology of all our spaces, $H_*(\ZZ{i})$, are all bicommutative,
biassociative graded (and sometimes bigraded) Hopf algebras.  When we use the zero
component, $\ZZZ{i}$, the Hopf algebra is connected.  The standard reference for
Hopf algebras is, of course, \cite{MM}.

Because we know the homotopy of our spaces, we know the zeroth homology of the spaces
in the Omega spectra.  We have $H_0(\ZZ{*}) \simeq \Zq[Z^*]$, the group ring on
the coefficients.  More precisely, $H_0(\ZZ{i}) \simeq \Zq[Z^i] \simeq \Zq[\pi_{-i}^S(Z)]$.
Then we have $H_*(\ZZ{i}) \simeq \Zq[Z^i]\otimes H_*(\ZZZ{i})$.
Except for $bo$ and $BoP$, all of our spaces have homotopy only in even degrees.
As a result, the above isomorphism degenerates when $i$ is odd because $\ZZ{i} = \ZZZ{i}$.

Our main tool is the bar spectral sequence.  We state what we need here.

\begin{thm}[\cite{RS}]
There is a first quadrant homology spectral sequence of Hopf algebras going from
$H_*(\ZZ{i-1})$ to $H_*(\ZZ{i})$ with 
\[
E^2_{*,*} \simeq \Tor^{H_*(\ZZ{i-1})}_{*,*}(\Zq,\Zq) \Rightarrow H_*(\ZZ{i})
\]
\[
d_r \colon E^r_{u,v} \lra E^r_{u-r,v+r-1}
\]
\end{thm}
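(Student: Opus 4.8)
The plan is to produce this as the homology spectral sequence of the skeletal filtration of a bar construction. Since $\Omega\ZZ{i}\simeq\ZZ{i-1}$ and $\ZZ{i-1}$ is a group-like $H$-space, being an infinite loop space, the connected space $\ZZZ{i}$ is equivalent to the classifying space $B\ZZ{i-1}$, which I would model by the geometric realization $|B_\bullet\ZZ{i-1}|$ of the simplicial space whose space of $p$-simplices is $\ZZ{i-1}^{\times p}$, with $d_0$ and $d_p$ projecting away an outer coordinate, $d_j$ for $0<j<p$ multiplying the $j$-th and $(j{+}1)$-st coordinates, and degeneracies inserting the basepoint. The non-zero components of $\ZZ{i}$ only reproduce the group-ring tensor factor $\Zq[Z^i]$ already split off in section~\ref{ha-bss}, so it suffices to treat $\ZZZ{i}$.

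First I would filter $|B_\bullet\ZZ{i-1}|$ by skeleta and take the associated homology spectral sequence of a simplicial space; the bar construction is proper, so this filtration behaves as expected. The $E^1$-page is the normalized chain complex of the simplicial graded $\Zq$-module $p\mapsto H_*(\ZZ{i-1}^{\times p})$, with $d^1$ the (over $\Zq$, plain) sum of the induced face maps. Since we use $\Zq$-coefficients, the Künneth theorem gives $H_*(\ZZ{i-1}^{\times p})\cong H_*(\ZZ{i-1})^{\otimes p}$, and under this identification the $E^1$-page with its $d^1$ is precisely the normalized bar complex computing $\Tor^{H_*(\ZZ{i-1})}_{*,*}(\Zq,\Zq)$; hence $E^2_{*,*}\simeq\Tor^{H_*(\ZZ{i-1})}_{*,*}(\Zq,\Zq)$, with the internal grading of $H_*(\ZZ{i-1})$ supplying the second index. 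The spectral sequence sits in the first quadrant because $H_q$ of a space vanishes for $q<0$ and there are no simplices in negative degree; the skeletal filtration is exhaustive and finite in each total degree, so it converges strongly to $H_*(\ZZZ{i})$, and $d_r$ has the bidegree forced by the skeletal indexing.

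Next I would install the Hopf-algebra structure on each page. The coalgebra half is easy: the diagonal of $\ZZ{i-1}$ is a monoid map $\ZZ{i-1}\to\ZZ{i-1}\times\ZZ{i-1}$, so applying $B_\bullet$ gives a simplicial map $B_\bullet\ZZ{i-1}\to B_\bullet(\ZZ{i-1}\times\ZZ{i-1})\cong B_\bullet\ZZ{i-1}\times B_\bullet\ZZ{i-1}$, and over $\Zq$ (Künneth once more) this makes the entire spectral sequence one of coalgebras, with $E^2$ carrying the standard coalgebra structure on $\Tor$. The algebra structure uses that $\ZZ{i-1}$ is an infinite loop space, hence $E_\infty$, so that $|B_\bullet\ZZ{i-1}|$ carries a homotopy-commutative, homotopy-associative product compatible with the skeletal filtration; on $E^2$ this is the shuffle product on $\Tor$. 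Checking that each $E^r$ is thereby a bicommutative, biassociative Hopf algebra, that each $d_r$ is simultaneously a derivation and a coderivation, and that the two structures are compatible, is the bookkeeping-heavy but routine part of the argument, and it is exactly what is established in \cite{RS}, which I would cite rather than reprove.

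The step I expect to be most delicate is the passage from the bare equivalence $\Omega\ZZ{i}\simeq\ZZ{i-1}$ to an honest bar model: one must work one component at a time, which is why one first strips off $\Zq[Z^i]$ and argues with the connected space $\ZZZ{i}$, and one needs the full infinite-loop, not merely $H$-space, structure on $\ZZ{i-1}$ to obtain a spectral sequence of algebras rather than only of coalgebras. Granting these points, the remainder is the standard machinery of simplicial spaces and bar resolutions.
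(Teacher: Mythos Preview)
The paper does not supply a proof of this theorem: it is stated with attribution to Rothenberg--Steenrod \cite{RS} and then used as a black box throughout. So there is no argument in the paper to compare your proposal against.

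That said, your sketch is correct and is essentially the standard construction underlying \cite{RS}: filter $|B_\bullet\ZZ{i-1}|$ by skeleta, identify $E^1$ over $\Zq$ via K\"unneth with the algebraic bar complex so that $E^2\simeq\Tor$, and install the Hopf-algebra structure from the diagonal (coproduct) and the $H$-space structure on the delooping (product). Your caution that a bare $H$-space structure on $\ZZ{i-1}$ is not quite enough to make the product compatible with the skeletal filtration is well placed; this is precisely the point that requires extra care, and deferring to \cite{RS} for it is the right call. The only cosmetic remark is that the paper's convention, stated in the remark immediately following the theorem, is to fold the group-ring factor $\Zq[Z^i]$ back in by treating $\Zq[\Zp]$ as a polynomial algebra on one generator, rather than to strip it off and work with $\ZZZ{i}$ as you do; the two bookkeeping schemes are equivalent.
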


\begin{remark}
Because we are dealing with infinite loop spaces we do not run into weird convergence
problems associated with problematic fundamental groups.  In fact, we will do away
with the use of the zero component, $\ZZZ{i}$, and, instead, use the convention that we
think of the group ring $\Zq[\Zp]$ as a polynomial algebra on one generator.
When there is no torsion in the homotopy,
the use of this convention gives
$\Tor^{H_0(\ZZ{i})}_{*,*}(\Zq,\Zq)$ is an exterior algebra on the suspensions
of the $\Zq[\Zp]$ generators in $\Zq[Z^i]$, all located in $(1,0)$.  This follows from
the Hurewicz isomorphism in degree 1 if not the algebra.
\end{remark}

We actually only need the spectral sequence in very limited situations.
Keep in mind that $\Tor$ commutes with tensor products.

\begin{prop}
\label{free}
If $H_*(\ZZ{i-1})$ is polynomial on even degree generators for 
degrees $* < j$, then $H_*(\ZZ{i})$ is an exterior algebra 
on the suspensions of the generators
in degrees $* \le j$.
If $H_*(\ZZ{i-1})$ is exterior on odd degree generators for 
degrees $* < j$, then $E^2 = E^\infty H_*(\ZZ{i})$ is 
an even degree divided power algebra on the suspensions of the generators
in degrees $* \le j$.
\end{prop}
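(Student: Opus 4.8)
\emph{Proof proposal.} The plan is to read off the $E^2$-page of the bar spectral sequence $\Tor^{H_*(\ZZ{i-1})}(\Zq,\Zq)\Rightarrow H_*(\ZZ{i})$ through total degree $j$ and then check that it collapses there. By hypothesis $H_*(\ZZ{i-1})$ agrees through degree $j$ with a free polynomial algebra $\Zq[x_\alpha]$ on even generators (first statement) or a free exterior algebra $E[y_\alpha]$ on odd generators (second statement): either because the bar construction in total degrees $<j$ involves only the part of the algebra in degrees $<j$, or, equivalently, by peeling off a polynomial (resp.\ exterior) tensor factor carrying those generators and using that $\Tor$ commutes with tensor products. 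Since $\Tor^{\Zq[x]}(\Zq,\Zq)=E[\sigma x]$ and $\Tor^{E[y]}(\Zq,\Zq)=\Gamma[\sigma y]$ (the divided power algebra), with $\sigma(-)$ in bidegree $(1,|-|)$, we get through total degree $j$
\[
E^2_{*,*}\ \cong\ \bigotimes_\alpha E[\sigma x_\alpha]
\qquad\text{resp.}\qquad
E^2_{*,*}\ \cong\ \bigotimes_\alpha\Gamma[\sigma y_\alpha].
\]
In each case the suspensions $\sigma x_\alpha$, $\sigma y_\alpha$ have internal degree one more than the original generators --- so $\le j$ --- and homological filtration $1$; moreover the exterior generators have odd total degree, while the divided power subalgebra lies entirely in even total degrees.

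Next I would show $E^2=E^\infty$ through total degree $j$. The filtration-$1$ classes $\sigma x_\alpha$ (resp.\ $\sigma y_\alpha$) are permanent cycles, since $d_r$ lowers filtration by $r\ge 2$. In the polynomial case, multiplicativity of the spectral sequence propagates this to all of $\bigotimes_\alpha E[\sigma x_\alpha]$, and a degree count shows no $d_r$ can land in that subalgebra in the range $\le j$; hence $E^2=E^\infty$ and $H_*(\ZZ{i})$ is the asserted exterior algebra through degree $j$, the free graded-commutative structure reconstituting without an extension problem. In the divided power case, the subalgebra $\bigotimes_\alpha\Gamma[\sigma y_\alpha]$ sits in even total degrees while $d_r$ reverses parity, so no differential can originate there; a degree/filtration count again excludes differentials landing in it, giving $E^2=E^\infty H_*(\ZZ{i})$ through degree $j$.

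The only step needing real care is this last collapse argument at the top of the range: one has to check that in total degrees $\le j$ nothing of homological filtration $\ge 2$ coming from the ``rest'' of $H_*(\ZZ{i-1})$ (the part in degrees $\ge j$) interferes --- essentially the bookkeeping that $\Tor$ of that part is trivial in positive total degree $<j$ and that the filtration-$1$ classes it contributes in degree exactly $j$ are permanent cycles with no room to be hit. Everything else is formal: the two $\Tor$ identities above and the fact that suspension raises internal degree by one and lands in homological filtration one.
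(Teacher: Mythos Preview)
Your proposal is correct and follows essentially the same route as the paper's proof: compute $\Tor$ of a polynomial (resp.\ exterior) algebra, observe the suspensions sit in filtration~1 hence are permanent cycles, and rule out incoming differentials by a degree/filtration count. The one place where the paper is crisper is precisely the bookkeeping you flag in your last paragraph: rather than speaking of ``peeling off a tensor factor'' or ``the rest of $H_*(\ZZ{i-1})$'', the paper simply notes that knowing $H_*(\ZZ{i-1})$ in internal degrees $<j$ determines $E^2_{u,v}$ for all $v<j$, and since nothing lives in filtration $u=0$, this already covers every bidegree with $u+v\le j$. That single observation replaces your hedged final paragraph and makes the range $\le j$ (not just $<j$) immediate.
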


\begin{proof}
To compute $\Tor$ you need a resolution.  Since we are only using degrees $* < j$,
it follows that our input gives all $E^2_{u,v}$ with $v < j$.
Also, ignoring the $\Zq$ in degree zero, there is nothing in the zero filtration.
Since $u > 0$, this gives all $E^2_{u,v}$ with total degree $u+v \le j$. 

If $i-1$ is even, we take
$\Tor$ of a polynomial algebra with even degree 
generators $x_{2i}$ to get an exterior algebra
on the elements that are  suspensions of the generators in bidegree $(1,2i)$.
Since they are in filtration 1, there are no differentials on them.  
Because of this, there can be no differentials from degree $(u,v)$ when $v<j$.
Any differential that hits something in total degree $* \le j$, must come from
$(u,v)$ with $u > 2$ and $u+v \le j+1$, but this requires $v < j$, so
there can be no differentials interfering.  We have $E^\infty$ 
for $ * \le j$ is an exterior algebra on odd degree generators.  
There can be no extensions because all that can happen is an odd exterior 
generator squares to an even generator, but there are none in degrees $* \le j$.

If $i-1$ is odd,
$\Tor$ of an exterior algebra with odd degree 
generators $x_{2i+1}$ is a divided power algebra, $\Gamma[\sigma x_{2i+1}]$,
on the elements $\sigma x_{2i+1}$ in $(1,2i+1)$.
Thus $\Tor$, up through degrees $* \le j$ is even degree so there can be no differentials
on them.  Just as above, they cannot be hit by differentials.
There can still be extension problems with respect to squaring elements.
\end{proof}

\begin{remark}
In the case of the spectral sequence above for $\BPP{i}$, we know that the homology for
$i$ odd is exterior and for $i$ even polynomial.  That means that all of the possible
extension problems in the spectral sequence for $i$ even must be solved. 
\end{remark}

\section{Review of $BoP$}
\label{rev-bop}

We need to collect a few facts.
Recall our notation,
 $\SBPP  =
\prod_{a \ge 0} \Sigma^{8a} {BP}$.

\begin{prop}[from A. Baker]
\label{baker}
There is a stable cofibration
\[
BoP \lra \SBPP \lra \Sigma^2 BoP
\]
This
is torsion free and short exact on (co)homology.
\end{prop}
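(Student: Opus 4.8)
The cofibration itself is due to Baker; what needs proof is that it induces short exact sequences on mod $2$ and on $2$-local integral (co)homology, and hence that $H_*(BoP;\Zp)$ is torsion free. The plan is to pass to the Puppe sequence
\[
BoP \lra \SBPP \lra \Sigma^2 BoP \llra{\partial} \Sigma BoP \lra \Sigma \SBPP \lra \cdots
\]
and to show that the connecting map $\partial\colon \Sigma^2 BoP \to \Sigma BoP$ is multiplication by $\eta \in \pi_1^S$; once that is known the long exact sequences on (co)homology degenerate automatically.

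That $\partial$ equals $\eta$ is the essential feature of Baker's construction: it realizes $\SBPP$ as $BoP \wedge C\eta$, where $C\eta$ is the cofiber of $\eta\colon S^1 \to S^0$, so that the cofibration is the image under $BoP \wedge (-)$ of the sequence $S^0 \to C\eta \to S^2$ and $\partial = \id_{BoP} \wedge\, (\Sigma\eta\colon S^2 \to S^1)$. This is the $BoP$--$\SBPP$ analogue of Wood's splitting $bu \simeq bo \wedge C\eta$ — the case $n = 1$, $\SBoPn{1} = bo$, $\SBPPn{1} = bu$, of Conjecture \ref{conj} — and the map $BoP \to bo$ smashes with $C\eta$ to a morphism of cofibrations from this one to Wood's cofibration $bo \to bu \to \Sigma^2 bo$. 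Granting this, the rest is formal. For $\Lambda = \Zq$ or $\Zp$, the map $\Sigma\eta\colon S^2 \to S^1$ induces $0$ on $\widetilde H_*(-;\Lambda)$ for degree reasons, so by the Künneth isomorphism $\partial = \id_{BoP} \wedge \Sigma\eta$ acts as $0$ on $H_*(-;\Lambda)$, and dually on $H^*(-;\Lambda)$. The long exact sequences therefore split into
\[
0 \to H_*(BoP;\Lambda) \to H_*(\SBPP;\Lambda) \to H_{*-2}(BoP;\Lambda) \to 0
\]
and
\[
0 \to H^{*-2}(BoP;\Lambda) \to H^*(\SBPP;\Lambda) \to H^*(BoP;\Lambda) \to 0
\]
for both $\Lambda = \Zq$ and $\Lambda = \Zp$. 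Finally $H_*(\SBPP;\Zp) = \bigoplus_{a \ge 0}\Sigma^{8a} H_*(BP;\Zp)$ is free over $\Zp$ and the first sequence embeds $H_*(BoP;\Zp)$ into it, so $H_*(BoP;\Zp)$ is torsion free — the $2$-torsion that $BoP$ inherits in homotopy from $bo$ is annihilated by smashing with $C\eta$, exactly as $\pi_*(bo \wedge C\eta) = \pi_* bu$ is torsion free.

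The one real input is the identification $\SBPP \simeq BoP \wedge C\eta$ (equivalently, $\partial = \eta$), and that is where the work lies: confirming that Baker's construction is this $C\eta$-smash and not merely some other presentation of a cofibration with the same ends, or — absent a citable statement — arguing directly that the connecting map of Baker's cofibration acts trivially on ordinary (co)homology. A more pedestrian, self-contained route to mod $2$ short exactness uses only what is recalled in this section: $H^*(\SBPP;\Zq) = \bigoplus_{a \ge 0}\Sigma^{8a}\,\sA/\sA(Q_0,Q_1,Q_2,\ldots)$, while $H^*(BoP;\Zq)$ is Pengelley's computed $\sA$-module (a direct sum of cyclic modules, with bottom summand $\sA(2,1) = H^*(bo)$), and the cohomology map carries the source onto the target by the evident quotients adjoining $Sq^2$ to the defining relations, hence is surjective; but the integral, torsion-free conclusion then requires an additional Bockstein argument and is decidedly less clean, which is why the $\eta$-route is the one to take.
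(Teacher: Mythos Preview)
Your approach is essentially the paper's: both identify the cofibration as $BoP$ smashed with $S^0 \to C\eta \to S^2$, so that the connecting map is $\eta$ and therefore null on ordinary (co)homology. The one substantive difference is where you stop and where the paper starts. You take the identification $\SBPP \simeq BoP \wedge C\eta$ as Baker's input and flag it as ``the one real input \ldots\ where the work lies''; the paper \emph{proves} exactly that identification. It computes
\[
H^*(BoP \wedge C\eta) \;\simeq\; H^*(BoP)\otimes H^*(C\eta)
\;\simeq\; \bigoplus_{a \ge 0}\Sigma^{8a}\, \sA/\sA(Q_0,Q_1,Q_2,\ldots)
\]
using Pengelley's description $H^*(BoP) \simeq \bigoplus_{a\ge 0}\Sigma^{8a}\,\sA/\sA(Q_0,Sq^2,Q_1,Q_2,\ldots)$, and then invokes the Brown--Peterson characterization: any spectrum with this cohomology is a wedge of suspended $BP$'s. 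So rather than assuming Baker hands you the $C\eta$-smash, the paper builds the cofibration from scratch and then recognizes the middle term. Once that is in place, your deduction of short exactness and torsion-freeness is exactly right and matches the paper's ``the short exact sequence follows.'' Your ``pedestrian'' alternative via the explicit $\sA$-module surjection is in fact close to the paper's actual argument, just run in a slightly different logical order.
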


Note that there is no torsion in homology for these spectra because they are even degree.

\begin{proof}
We smash $BoP$ with the cofibration
\[
S^1 \lra S^0 \lra C(\eta)
\]
to get 
\[
\Sigma BoP \lra BoP \lra BoP \wedge C(\eta)
\]
or
\[
BoP \lra 
BoP \wedge C(\eta)
\lra
\Sigma^2 BoP 
\]
All we need to do is show that 
\[
BoP \wedge C(\eta) 
\simeq
\SBPP
\]
Let $A$ be the mod 2 Steenrod algebra and $Q_i$ the Milnor primitives, \cite{MilA}.
From \cite{Peng} we know the mod 2 cohomology
\[
H^*(BoP) \simeq \oplus_{a \ge 0} \Sigma^{8a} A/A(Q_0,Sq^2, Q_1,Q_2,Q_3, \ldots)
\]
The usual way to write this is to take out $Q_0$ on both the left and right, but that
is equivalent to taking out all of the $Q_i$ on the right.  Then $Sq^2$ is taken out
on the right as well.  For future use we prefer this notation.
We know $H^*(C(\eta))$ just has cells in degree 0 and 2 connected by $Sq^2$.  
This gives
\[
H^*(BoP\wedge C(\eta)) \simeq H^*(BoP)\otimes H^*(C(\eta))
\simeq
 \oplus_{a \ge 0} \Sigma^{8a} A/A(Q_0, Q_1,Q_2,Q_3, \ldots)
\]
This last is the cohomology of $\SBPP$, and anything with such cohomology is 
a product of $BP$ spectra, \cite{BP}.
The short exact sequence follows.
\end{proof}

We also make use of the old result (of R. Wood, see \cite[2.3.1]{KLW} for a discussion
of references) that comes about in a similar way. 

\begin{prop}
There is a stable cofibration
\[
bo \lra bu \lra \Sigma^2 bo
\]
This
is short exact on $\Zq$-(co)homology.
\end{prop}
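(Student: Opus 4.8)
**Proof proposal for the final statement (Proposition: the cofibration $bo \to bu \to \Sigma^2 bo$ is short exact on $\Zq$-(co)homology).**

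The plan is to mimic the proof of Proposition~\ref{baker} exactly, replacing $BoP$ by $bo$ and $\SBPP$ by $bu$. First I would start from the standard Wood cofibration, recalling that $bu \simeq bo \wedge C(\eta)$, where $C(\eta)$ is the cofiber of $\eta\colon S^1 \to S^0$. (This is the "old result of R. Wood" cited just before the statement; one reference is \cite[2.3.1]{KLW}.) Granting this equivalence, one smashes the elementary cofibration
\[
S^1 \lra S^0 \lra C(\eta)
\]
with $bo$ to obtain
\[
\Sigma bo \lra bo \lra bo \wedge C(\eta) \simeq bu,
\]
and rotating this triangle gives precisely
\[
bo \lra bu \lra \Sigma^2 bo,
\]
which is the asserted cofibration. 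So the only real content is the statement about $\Zq$-(co)homology.

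For the cohomology claim, I would compute directly with the Steenrod algebra. We have the classical identifications $H^*(bo) \simeq \sA/\sA(Q_0, Sq^2)$ and $H^*(bu) \simeq \sA/\sA(Q_0, Q_1)$, and $H^*(C(\eta))$ has one generator in degree $0$ and one in degree $2$, joined by $Sq^2$. Since $C(\eta)$ has free (indeed one-dimensional-in-each-relevant-degree) $\Zq$-cohomology, the Künneth theorem gives $H^*(bo \wedge C(\eta)) \simeq H^*(bo) \otimes H^*(C(\eta))$ as $\Zq$-vector spaces, and as $\sA$-modules one checks this tensor product is $\sA/\sA(Q_0, Q_1)$: adjoining the degree-$2$ class with its $Sq^2$ exactly "cancels" the relation $Sq^2 = 0$ in $\sA/\sA(Q_0,Sq^2)$, leaving $\sA/\sA(Q_0,Q_1)$ (one uses $Q_1 = [Sq^2, Q_0]$, or more simply the Adem/Milnor-basis bookkeeping, exactly as in the $BoP$ case). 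This confirms the middle term is $bu$ and, because the long exact sequence in $\Zq$-cohomology coming from the cofibration has its connecting maps forced to vanish — all three spectra are concentrated in even degrees and have torsion-free integral cohomology, so there is no room for a nontrivial boundary — we get a short exact sequence
\[
0 \ra H^*(\Sigma^2 bo) \ra H^*(bu) \ra H^*(bo) \ra 0.
\]
Dualizing (again legitimate since everything is degreewise finite and torsion-free over $\Zq$) yields the short exact sequence in homology.

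The main obstacle, such as it is, is not the triangle rotation but verifying the $\sA$-module identification $H^*(bo) \otimes H^*(C(\eta)) \cong \sA/\sA(Q_0, Q_1)$ and, correspondingly, that the maps in the cohomology long exact sequence are injective/surjective rather than merely fitting into an exact triangle. Both points are completely parallel to what was done for Proposition~\ref{baker}: the key input is that smashing with $C(\eta)$ "adds back a $Sq^2$," converting an $\sA(1)$-type quotient into a $BP\langle 1\rangle$-type quotient, and the evenness/torsion-freeness of all three spectra kills the connecting homomorphisms. I would therefore present this proof quite briefly, citing the Wood splitting $bu \simeq bo\wedge C(\eta)$ and noting that the remaining argument is identical to the proof of Proposition~\ref{baker} with $BoP \rightsquigarrow bo$ and $\SBPP \rightsquigarrow bu$.
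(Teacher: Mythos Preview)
Your overall strategy---smash $bo$ with the cofibration $S^1 \to S^0 \to C(\eta)$, identify $bo \wedge C(\eta)$ with $bu$, and then read off short exactness---is exactly what the paper intends by ``comes about in a similar way'' (the paper gives no proof beyond that phrase and the citation to Wood). So structurally you are on target.

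There is, however, a genuine slip in your justification for short exactness. You assert that ``all three spectra are concentrated in even degrees,'' but this is false for $bo$ and $bu$: for instance $H_*(bu;\Zq)$ contains $\zeta_3$ in degree~$7$, and $H^*(bo;\Zq)=\sA/\sA(Sq^1,Sq^2)$ is likewise not even-graded. (The evenness claim \emph{does} hold for $BoP$ and $\SBPP$, which is why the parallel argument works there.) The clean fix is to note that the connecting map in the long exact sequence is induced by $\eta \wedge 1_{bo}\colon \Sigma bo \to bo$, and $\eta$ itself induces the zero map on $H_*(-;\Zq)$ for trivial degree reasons; smashing with $bo$ preserves this, so the boundary vanishes and the long exact sequence breaks into short exact pieces. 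Alternatively, observe directly that $H^*(bu)\to H^*(bo)$ is a map of cyclic $\sA$-modules carrying generator to generator, hence surjective. Either replacement repairs the argument with no change to the rest of your outline.
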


We always assume our $bo$ and $bu$ to be the 2-local versions and will suppress
using the notation $bo_{(2)}$ and $bu_{(2)}$.

We need to define two spectra, $F$ and $X$. 
$X$ is easy.

\begin{lemma}[for $BoP$, \cite{Peng}]
There are stable cofibrations that define spectra $F$ and $X$ with torsion-free
even degree homotopy.
They give short exact sequences in their homotopy groups.
\[
F \lra BoP \lra bo
\]
\[
X \lra \SBPP \lra bu
\]
\end{lemma}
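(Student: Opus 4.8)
\emph{The plan.} The spectrum $F$ and all of its stated properties are Pengelley's \cite{Peng} (cf. the discussion in the introduction, where it is noted that ``$bo$ carries all of the torsion homotopy of $BoP$''), so I will simply recall how to extract the lemma from his computation; the only genuinely new item is the (elementary) spectrum $X$, which I construct by hand and match up with the other cofibrations. In both cases the strategy is the same: exhibit a map of spectra whose target is $bo$ (resp.\ $bu$), check that it is surjective on homotopy groups, let $F$ (resp.\ $X$) be the fiber, and read off from the resulting short exact sequence of homotopy groups that the fiber's homotopy is torsion free and concentrated in even degrees.

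\emph{The case of $X$.} Recall that $bu \simeq \SBPn{1}$, and let $BP \to \SBPn{1}$ be the standard truncation, which on homotopy is the quotient $\Zp[v_1,v_2,\dots] \twoheadrightarrow \Zp[v_1]$ and on $\Zq$-cohomology is the surjection $A/A(Q_0,Q_1) \to A/A(Q_0,Q_1,Q_2,\dots)$. Composing the projection $\SBPP = \prod_{a \ge 0}\Sigma^{8a}BP \to BP$ onto the $a=0$ factor with $BP \to bu$ gives a map $\SBPP \to bu$; define $X$ to be its fiber. On homotopy this map is the composite ``project onto the degree-$0$ summand, then kill $v_2,v_3,\dots$'', which is surjective, so the homotopy long exact sequence of the cofibration breaks into short exact sequences $0 \ra \pi_* X \ra \pi_*\SBPP \ra \pi_* bu \ra 0$. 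Since $\pi_*\SBPP$ is a free $\Zp$-module concentrated in even degrees, the same holds for its subgroup $\pi_* X$, which is the assertion. A routine check on $\Zq$-cohomology (all spectra in sight are products of $BP\langle n\rangle$'s) shows that this $\SBPP \to bu$ fits into a commutative square with Baker's cofibration (Proposition \ref{baker}), Wood's cofibration, and $BoP \to bo$, hence induces a map $F \to X$ of cofiber sequences; this is the form in which $X$ will be used later.

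\emph{The case of $F$.} Pengelley \cite{Peng} constructs the map $BoP \to bo$ and computes $\pi_* BoP$; in particular the induced map $\pi_* BoP \to \pi_* bo$ is onto and restricts to an isomorphism on torsion subgroups, and the free part of $\pi_* BoP$ sits in even degrees. Let $F$ be the fiber. Surjectivity splits the homotopy long exact sequence into $0 \ra \pi_* F \ra \pi_* BoP \ra \pi_* bo \ra 0$, so $\pi_* F \hra \pi_* BoP$. If $0 \neq x \in \pi_* F$ were torsion, its image in $\pi_* BoP$ would be a nonzero torsion class dying in $\pi_* bo$, contradicting injectivity of $BoP \to bo$ on torsion; hence $\pi_* F$ is torsion free. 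In odd degrees $\pi_* bo$ is entirely torsion, so $\pi_* BoP \to \pi_* bo$ is there both surjective and (using that $\pi_* F$ is torsion free while the free part of $\pi_* BoP$ is even) injective; thus $\pi_{\mathrm{odd}} F = 0$, completing the proof.

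\emph{The main obstacle.} For $X$ there is nothing hard — it is a formal consequence of the splitting of $\SBPP$ and the truncation $BP \to \SBPn 1$. For $F$ the real content is imported: the statement is a repackaging of Pengelley's computation of $\pi_* BoP$ together with the classical structure of $\pi_* bo$ (and, underneath, Conner--Floyd's description of $MSU_*$). A self-contained argument would have to reproduce that computation, which is not the aim here; we take it as known.
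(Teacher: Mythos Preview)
Your proposal is correct and follows essentially the same approach as the paper: for $F$ you defer to Pengelley, and for $X$ you use the identification $bu\simeq\SBPn{1}$ and the truncation $BP\to\SBPn{1}$ on the $a=0$ factor, which is exactly the paper's (very terse) argument. Your extra details---the explicit deduction of ``torsion-free, even degree'' from the short exact sequences and the compatibility with diagram~\ref{bigone}---are welcome expansions of what the paper leaves implicit.
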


\begin{proof}
The spectrum $bu$ at $p=2$ is sometimes called 
$BP\langle 1 \rangle$ 
and there is a map
$BP \ra 
BP\langle 1 \rangle$, see \cite{JW2}.
The other copies of $BP$ do not enter in.
\end{proof}

This all gives rise to horizontal and vertical cofibrations:
\begin{diag}
\[
\label{bigone}
\xymatrix{
F \ar[r] \ar[d] & BoP \ar[r] \ar[d] &  bo \ar[d] \\
X \ar[r] \ar[d] & \SBPP \ar[r] \ar[d] & bu \ar[d] \\
\Sigma^2 F \ar[r]  & \Sigma^2 BoP \ar[r]  & \Sigma^2 bo  \\
}
\]
\end{diag}

We need some unstable information to make our computations work.

\begin{thm}[ $i=0$, Corollary 9.6, page 58, and following comments \cite{CF}]
\label{cf66}
\[
\BoP{i} \simeq 
\F{i} 
\times
\bo{i}
\qquad i \le 6
\]
\[
\BPP{i} \simeq \X{i} \times \bu{i} \qquad i \le 6
\]
\end{thm}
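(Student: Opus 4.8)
The plan is to reduce everything to the case $i=0$ --- which is exactly the Conner--Floyd input named in the statement --- and then to propagate it, downward by looping and upward on the bounded range $0<i\le6$ by a single delooping argument.

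For $i<0$ there is nothing to prove. Since $\BoP{i}=\Omega^{-i}\BoP{0}$ and $\Omega$ takes a finite product to the product of the loop spaces, equation \ref{cf} gives $\BoP{i}\simeq\Omega^{-i}\F{0}\times\Omega^{-i}\bo{0}=\F{i}\times\bo{i}$, and in exactly the same way $\BPP{i}\simeq\X{i}\times\bu{i}$ for $i<0$.

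For $0<i\le6$ we must deloop. Applying $\Omega^\infty\Sigma^i$ to the cofibrations $F\to BoP\to bo$ and $X\to\SBPP\to bu$ defining $F$ and $X$ gives fibrations of infinite loop spaces $\F{i}\to\BoP{i}\xrightarrow{q}\bo{i}$ and $\X{i}\to\BPP{i}\to\bu{i}$. Since these are fiber sequences of infinite loop spaces, $\BoP{i}\simeq\F{i}\times\bo{i}$ precisely when $q$ admits a section, equivalently when the space-level connecting map $\delta_i\colon\bo{i}\to\F{i+1}$ --- the image under $\Omega^\infty\Sigma^i$ of the spectrum-level connecting map $bo\to\Sigma F$ --- is null-homotopic; a section $s$ then produces the equivalence $\F{i}\times\bo{i}\to\BoP{i}$, $(x,b)\mapsto x\cdot s(b)$, using the multiplication of $\BoP{i}$. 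Likewise $\BPP{i}\simeq\X{i}\times\bu{i}$ iff the corresponding map $\bu{i}\to\X{i+1}$ is null. Now the two cofibrations are short exact in homotopy, so $\delta_i$ and its $\SBPP$-counterpart vanish on homotopy groups, hence on the skeleta of $\bo{i}$, $\bu{i}$ lying below the bottom of $\F{i+1}$, $\X{i+1}$; the remaining obstructions to a section lie in a short list of cohomology groups of $\bo{i}$ and $\bu{i}$ controlled by the connectivity of $F$ and $X$.

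The main obstacle is showing those remaining obstruction classes vanish for $0<i\le6$; this is the content of Conner--Floyd's ``following comments'', and the cut-off $i\le6=2^3-2$ is intrinsic, not an artifact. I would organize the verification around theorem \ref{wsw2}: the spaces in the Omega spectra of $BP$ and of $bu=\SBPn{1}$ are products of the irreducible torsion-free H-spaces $Y_k$, and for $2<k\le6$ one has $Y_k\simeq\BPn{1}{k}=\bu{k}$ by theorem \ref{wsw2}[4.], so in this range $\bu{i}$ is built from $Y_k$'s that are already factors of $\BPP{i}$. Chasing this through the bottom and middle rows and the (Wood) right-hand column of diagram \ref{bigone} lets one split the $bu$-factor off $\BPP{i}$; then, using that the left-hand column of \ref{bigone} identifies the fiber of $\BoP{i}\to\bo{i}$ with that of $\BPP{i}\to\bu{i}$ together with the Baker column, one splits the $bo$-factor off $\BoP{i}$ as well, and this forces the obstruction classes above to vanish. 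For $i\ge7$ the identification $Y_k\simeq\bu{k}$ fails --- theorem \ref{wsw2}[4.] then gives $Y_k\simeq\BPn{2}{k}$ or higher --- the splitting genuinely breaks down, and this is the ``exotic transition from negative to positive spaces'' flagged in the introduction. (Equivalently, one may prove both splittings of the theorem by an induction on $i\le6$ run directly up the three interlocking cofiber sequences of \ref{bigone}, the base case $i=0$ being Conner--Floyd.)
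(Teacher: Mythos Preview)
Your treatment of $i\le 0$ (loop down from Conner--Floyd) and of the $\SBPP$ splitting (invoke \cite{WSWThesis2}, i.e.\ theorem \ref{wsw2}) matches the paper exactly.

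For the $\BoP$ splitting with $0<i\le 6$, however, there is a genuine gap. You correctly identify that what is needed is a section of $\BoP{i}\to\bo{i}$, equivalently nullhomotopy of $\delta_i\colon\bo{i}\to\F{i+1}$; but you never actually establish this. The sentence ``Chasing this through \dots\ one splits the $bo$-factor off $\BoP{i}$ as well'' is not an argument: knowing that $\bu{i}\to\BPP{i}$ has a section does not formally produce a section $\bo{i}\to\BoP{i}$ via diagram \ref{bigone}, and the obstructions you allude to (living in $H^*(\bo{i};\pi_*\F{i+1})$, with $\F{i+1}$ only $(i+6)$-connected) are not obviously zero---$\bo{6}$, for instance, has nontrivial cohomology in all the relevant degrees. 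You also misread the citation: Conner--Floyd's ``following comments'' concern only $i=0$; the paper is explicit that ``they only proved these cases for $i=0$ and that is all we will use.''

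The paper's route is quite different and explains why. It \emph{defers} the $0<i\le6$ $BoP$ splittings to theorem \ref{bosplit}: one first runs the entire inductive homology computation of sections \ref{fi}--\ref{hbop} (using only the $i\le0$ input) to prove that $H^*(\BoP{6};\Zp)$ is torsion-free; only then can one lift the cohomology classes coming from $\F{6}$ to maps $\BoP{6}\to Y_k$ (the torsion obstructions to lifting through the Postnikov tower of $Y_k$ vanish precisely because $H^*(\BoP{6};\Zp)$ has no torsion), and combine with $\BoP{6}\to\bo{6}$ to get the equivalence at $i=6$. Looping down gives $0<i<6$. So the splitting for positive $i$ is a \emph{consequence} of the homology theorems, not an input to them---exactly the ``exotic transition from negative to positive spaces'' the introduction warns about. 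Your sketch tries to shortcut this, but the shortcut is not there.
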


\begin{proof}
Of course Conner and Floyd did not use $BoP$ and $BP$ in their work because they did
not exist at the time.  The above proposition is the modern interpretation.
They only proved these cases
for $i=0$ and that is all we will use.  
Of course, the $i < 0$ cases follow.
Only in the end will we improve on this and
give the $\BoP{i}$ splittings for $0 < i \le 6$.
The $BP$-$bu$ case is already known from \cite{WSWThesis2} which can be used
to give  the entire
homotopy type of
$\X{i}$ for $i \le 8$. 
\end{proof}

We need just a few more things.

\begin{lemma}
\label{splithomotopy}
The homotopy groups give
a split short exact sequence of free $\Zp$-modules for the fibration
\[
\F{i} \lra \X{i} \lra \F{i+2}
\]
\end{lemma}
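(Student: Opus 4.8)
The plan is to read the fibration directly off diagram~\ref{bigone} and then chase the homotopy long exact sequence. The left-hand column of \ref{bigone}, namely $F \lra X \lra \Sigma^2 F$, is a stable cofibration, hence a stable fibration; passing to $i$-th spaces in the associated $\Omega$-spectra — and using that the $i$-th space of $\Sigma^2 F$ is $\F{i+2}$ — identifies $\F{i} \lra \X{i} \lra \F{i+2}$ with a fibration of infinite loop spaces. Under the standard identifications $\pi_n(\F{i}) \cong \pi_{n-i}(F)$, $\pi_n(\X{i}) \cong \pi_{n-i}(X)$, and $\pi_n(\F{i+2}) \cong \pi_{n-i-2}(F)$, its homotopy long exact sequence is just the homotopy long exact sequence of the cofibre sequence $F \lra X \lra \Sigma^2 F$ of spectra.

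The key observation is that every connecting homomorphism in this long exact sequence vanishes. Indeed, the lemma that defines $F$ and $X$ tells us that the spectrum $F$ has torsion-free homotopy concentrated in even degrees, while a connecting map has the form $\pi_n(\F{i+2}) \lra \pi_{n-1}(\F{i})$, i.e. $\pi_{n-i-2}(F) \lra \pi_{n-i-1}(F)$: source and target lie in consecutive, hence opposite-parity, degrees, so one of them is zero. Therefore the long exact sequence breaks into short exact sequences
\[
0 \lra \pi_n(\F{i}) \lra \pi_n(\X{i}) \lra \pi_n(\F{i+2}) \lra 0 .
\]
All three terms are free $\Zp$-modules, since $\pi_*(F)$ and $\pi_*(X)$ are torsion-free and finitely generated in each degree.

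Finally, the splitting is automatic: $\Zp = \Z_{(2)}$ is a principal ideal domain, so the quotient $\pi_n(\F{i+2})$ is free, hence projective, and therefore splits the sequence. There is no real obstacle here. The two points that warrant a little care are that the left column of \ref{bigone} is genuinely a cofibre sequence with cofibre $\Sigma^2 F$ — which is exactly how that diagram was assembled, from Baker's cofibration \ref{baker} and the Wood cofibration $bo \lra bu \lra \Sigma^2 bo$ via the usual $3 \times 3$ construction — and the parity bookkeeping, which is what forces all the connecting maps to die. No naturality of the splitting is asserted, and projectivity of the quotient is all that the argument uses.
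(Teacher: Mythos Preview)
Your argument is correct and is essentially the paper's own proof, only written out in full: the paper just observes that in diagram~\ref{bigone} the $\Zq$-torsion lives entirely in $BoP$ and $bo$, so $\pi_*(F)$ and $\pi_*(X)$ are free $\Zp$-modules concentrated in even degrees, and ``the result follows.'' Your parity argument for the vanishing of the connecting maps and your appeal to projectivity of the free quotient are exactly the details the paper leaves implicit.
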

\begin{proof}
In the diagram \ref{bigone}, all of the $\Zq$-groups are in $BoP$ and $bo$.
All of the $\Zp$-free groups in the other spectra are in even degrees.  The
result follows.
\end{proof}

\begin{remark}
It is perhaps unfair to assume too much pre-existing knowledge.  To correct
that oversight, we give the homotopy groups of the well-known objects in diagram \ref{bigone}.
\[
BP_* \simeq \Zp[v_1, v_2,\ldots] \qquad |v_n| = 2(2^n-1)
\qquad \qquad
bu_* \simeq \Zp[v_1]
\]
\[
bo_{4i} \simeq \Zp
\qquad
bo_{8i+1} \simeq \Zq
\simeq
bo_{8i+2} 
\]
\end{remark}

In addition, we need some information about homology.

\begin{prop}
\label{xprop}
The homology, $H_*(\BPP{i})$, is torsion free and is polynomial on even degree
generators for $i$ even and exterior on odd degree generators for $i$ odd.
The homology, $H_*(\X{i})$, $i < 6$, is torsion free and is polynomial on even degree
generators for $i$ even and exterior on odd degree generators for $i$ odd.

There is a short exact sequence of polynomial Hopf algebras
\[
\Zq \lra H_*(\bo{2}) \lra H_*(\bu{2}) \lra H_*(\bo{4}) \lra \Zq
\]
\end{prop}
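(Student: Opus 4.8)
The plan is to deduce the short exact sequence
\[
\Zq \lra H_*(\bo{2}) \lra H_*(\bu{2}) \lra H_*(\bo{4}) \lra \Zq
\]
from the stable cofibration $bo \ra bu \ra \Sigma^2 bo$ (R.\ Wood's result) by passing to the associated spaces in the Omega spectra and running the bar spectral sequence. First I would recall the known homotopy groups from the remark above: $bu_* \simeq \Zp[v_1]$ is concentrated in even degrees, while $bo_*$ has its torsion-free part in degrees $4i$ and its $\Zq$'s in degrees $8i+1,8i+2$. In the relevant range for $\bo{2}$ and $\bo{4}$ (i.e.\ spaces with homotopy $\pi_{-2}$ and $\pi_{-4}$ of $bo$, hence $bo_{k+2}$ and $bo_{k+4}$), the $\Zq$-summands sit in degrees $8i+1,8i+2$, and a quick check shows that after the degree shift they land in the even-indexed spaces $\bo{2}$ and $\bo{4}$; so the homotopy of $\bo{2}$, $\bu{2}$, $\bo{4}$ consists of $\Zp$'s in even degrees together with, for $\bo{2}$ and $\bo{4}$, some $\Zq$'s which with our convention ($\Zq[\Zp]$ treated as polynomial on one generator) still contribute polynomial generators. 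Thus all three spaces have $H_*$ polynomial on even degree generators — this part is either already in \cite{WSWThesis2}/\cite{Stong} for $bu$ and $bo$, or follows from Proposition \ref{free} by induction on the Omega spectrum using that the previous (odd) space is exterior on odd generators.

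Next I would produce the maps. The cofibration $bo \ra bu \ra \Sigma^2 bo$ gives, on the level of Omega spectra, a fibration sequence $\bo{i} \ra \bu{i} \ra \bo{i+2}$ for each $i$; taking $i=2$ gives the maps of Hopf algebras $H_*(\bo{2}) \ra H_*(\bu{2}) \ra H_*(\bo{4})$ that we want. The content of the claim is exactness: injectivity on the left, that the image of the first map is exactly the ``kernel'' (Hopf-kernel) of the second, and surjectivity of the composite $H_*(\bu{2}) \ra H_*(\bo{4})$ (equivalently that $H_*(\bo{4})$ is the Hopf-algebra quotient). The key tool is the Rothenberg--Steenrod bar spectral sequence applied to the fibration, or more robustly a comparison of the three bar spectral sequences computing $H_*(\bo{2})$, $H_*(\bu{2})$, $H_*(\bo{4})$ from the homology of $\bo{1},\bu{1},\bo{3}$ respectively, together with the fact that a short exact sequence of Hopf algebras (over $\Zq$, bicommutative and biassociative, so everything is very rigid) of the form $\Zq \ra C \ra D \ra E \ra \Zq$ is detected by a count of generators: $D$ is free (polynomial) over $C$ and $E = D /\!/ C$. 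Concretely, since all three are polynomial, it suffices to check that in each degree the number of polynomial generators of $H_*(\bu{2})$ equals the number for $H_*(\bo{2})$ plus the number for $H_*(\bo{4})$, and that the maps carry generators to generators (or zero) compatibly — this is a Poincar\'e-series bookkeeping argument fed by the short exact sequence on homotopy groups (Lemma \ref{splithomotopy}-style reasoning but for the $bo$-$bu$ triple) and the known additive structure of $H_*(bo)$, $H_*(bu)$.

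The main obstacle I expect is the exactness in the middle: showing the image of $H_*(\bo{2}) \ra H_*(\bu{2})$ is precisely the Hopf sub-algebra over which $H_*(\bu{2})$ is free with quotient $H_*(\bo{4})$, rather than merely establishing the right Poincar\'e series. Degree counting shows the three polynomial algebras have matching ranks, but one must rule out a ``diagonal'' phenomenon where the first map is injective and the second surjective yet $\im$ of the first is strictly smaller than $\ker$ of the second (which would force torsion, a nonzero $\Tor$ term, somewhere). The way I would close this is to note that there \emph{is} no torsion: $\bu{i}$ has torsion-free homotopy and even-degree homology, $H_*(\bu{2})$ has no torsion, and the bar spectral sequence of the fibration $\bo{2}\ra\bu{2}\ra\bo{4}$ has $E^2 = \Tor^{H_*(\bo{2})}(\Zq, H_*(\bo{4}))$ (twisted), which because $H_*(\bo{2})$ and $H_*(\bo{4})$ are polynomial must collapse to the tensor product in the relevant range, forcing $H_*(\bu{2})$ to be exactly the extension; equivalently, since $H_*(\bu{2})$ is polynomial on the ``right'' number of generators and the two outer maps are maps of polynomial Hopf algebras hitting generators, the sequence is forced to be short exact. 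The remaining routine work — identifying which homotopy generators survive to which space after the degree shift, and matching up the bidegrees — is exactly the kind of low-dimensional homotopy-group bookkeeping that Conner--Floyd (Theorem \ref{cf66}) and the standard references for $H_*(bo)$, $H_*(bu)$ make available.
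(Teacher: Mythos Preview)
The paper's proof is citation: part one is \cite{WSWThesis}; part two follows immediately from the unstable splitting $\BPP{i}\simeq\X{i}\times\bu{i}$ of Theorem~\ref{cf66}, since $H_*(\X{i})$ is then a tensor factor of the known $H_*(\BPP{i})$; and part three is declared well-known with a pointer to Remark~\ref{homologies}. You address only part three and say nothing about $H_*(\X{i})$, which is the one piece with content specific to this paper --- and the fastest argument there really is the splitting, not a spectral sequence.

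For the $bo$--$bu$ sequence your reconstruction contains two concrete errors. First, the $\Zq$ summands in $\pi_*(\bo{2})$ do \emph{not} all land in even degrees: since $\pi_n(\bo{2})=bo_{n-2}$ and $bo_{8i+1}=\Zq$, one has $\pi_{8i+3}(\bo{2})=\Zq$, so your ``quick check'' fails. Second, and more seriously, you cannot invoke Proposition~\ref{free} to pass from $\bo{1}$ to $\bo{2}$: by Remark~\ref{homologies}, $H_*(\bo{1})=P[x_{2i+1}]$ is \emph{polynomial} on odd-degree generators, not exterior, so neither hypothesis of that proposition is met. The even-degree polynomial structure of $H_*(\bo{2})$ is therefore not a formal consequence of the bar spectral sequence; it is a classical fact about the Bott periodicity spaces. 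Once one simply takes the homologies $H_*(\bo{2})=P[x_{4i+2}]$, $H_*(\bu{2})=P[x_{2i}]$, $H_*(\bo{4})=P[x_{4i}]$ from Remark~\ref{homologies} as given, the short exact sequence follows from the fibration and a Poincar\'e-series match --- which is all the paper means by ``well-known''. Your spectral-sequence scaffolding is more elaborate than needed and, as written, rests on false intermediate claims.
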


\begin{proof}
The homology of $\BPP{i}$ is known from \cite{WSWThesis} and that for $\X{i}$
follows from the splitting, theorem \ref{cf66}.  The given short exact sequence is well-known,
but see remark \ref{homologies} below as well.
\end{proof}

\begin{remark}
This can be refined to see that $\X{i}$ has no torsion for $i \le 8$, but there the homology is
not polynomial.
\end{remark}

\begin{remark}
\label{homologies}
Again, it seems only fair to recall some of the well known results.  $\bu{2} = BU$, and, as
such, the homology is just $H_*(BU)\simeq \Zq[x_{2i}]$.  The spectrum $bo$ is more
complicated with $\bo{0} = Z \times BO = \KO{0}$.  By Bott periodicity 
we always have $\bo{i} = \bo{i-8} =
\KO{i}$ for $i < 4$ (in particular, for all negative $i$).
Let $P$ denote a polynomial algebra and $E$ and exterior algebra.  Also let $z_i$ denote
an $i$-th degree element.
We know that 
\[
H_*(\bo{0})\simeq \Zq[Z]\otimes P[x_i] \quad i > 0
\qquad
H_*(\bo{1})\simeq P[x_{2i+1}]
\qquad
H_*(\bo{2})\simeq P[x_{4i+2}]
\]
\[
H_*(\bo{3})\simeq E[x_{4i+3}]
\qquad
H_*(\KO{4})\simeq \Zq[Z]\otimes H_*(\bo{4}) \simeq \Zq[Z] \otimes P[x_{4i}]
\quad i > 0
\]
\[
H_*(\KO{5})\simeq E(x_1)\otimes H_*(\bo{5}) \simeq E(x_1) \otimes E[x_{4i+1}] \quad i > 0
\]
\[
H_*(\KO{6})\simeq E(x_{2^k})\otimes H_*(\bo{6}) \simeq E(x_{2^k}) \otimes E[x_{2i}] \quad 2i \ne 2^k 
\]
\[
H_*(\KO{7})\simeq E(x_i)
\]
This is all conveniently written down in \cite{CS} and \cite[Theorem 25.2]{NituER2}.
\end{remark}

\section{$H_*(\F{i})$, $i < 6$}
\label{fi} 

The goal of this section is to prove the following theorem.

\begin{thm}
\label{xf}
For $i \le 6$, 
the fibration of Lemma
\ref{splithomotopy}
gives a short exact sequence of Hopf algebras
\[
\Zq \ra H_*(\F{i}) \lra H_*(\X{i}) \lra H_*(\F{i+2}) \ra \Zq 
\]
When $i$ is odd, all three are exterior algebras on odd degree generators.
When $i < 6$ is even, all three are polynomial algebras on even degree generators.
\end{thm}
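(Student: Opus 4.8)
The plan is to prove Theorem~\ref{xf} by induction on $i$, and it helps to record first that for these spaces the polynomial/exterior conclusion is a formal consequence of torsion-freeness. Indeed $\pi_*(\F{i})$ is a free $\Zp$-module (Lemma~\ref{splithomotopy}) concentrated in degrees $\equiv i \pmod 2$, since $\pi_n(\F{i})\cong\pi_{n-i}(F)$ and $F$ has torsion-free even homotopy. Hence if in addition $H_*(\F{i};\Zp)$ is torsion free, then Theorem~\ref{wsw2}[2.] gives $\F{i}\simeq\prod_j Y_{k_j}$; comparing homotopy parities forces each $k_j\equiv i\pmod 2$, and Theorem~\ref{wsw2}[3.,4.] identifies $Y_{k_j}$ with a space $\BPn{n_j}{k_j}$, whose mod $2$ homology is polynomial on even generators when $k_j$ is even and exterior on odd generators when $k_j$ is odd \cite{WSWThesis}. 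So the remaining work is (a) the short exact sequence and (b) torsion-freeness of $H_*(\F{i};\Zp)$ for $i\le 6$.

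For (b) I would induct upward on $i$. The base case $i\le 0$ is where Conner--Floyd is used: Theorem~\ref{cf66} gives $\BPP{i}\simeq\X{i}\times\bu{i}$ and $\BoP{i}\simeq\F{i}\times\bo{i}$, so from the known homology of $\BPP{i}$ \cite{WSWThesis} and of the $\bu{i}$, $\bo{i}$ one reads off $H_*(\X{i})$ and then, degree by degree from the fibration $\F{0}\to\X{0}\to\F{2}$ in which $\F{2}$ is more highly connected than $\F{0}$, the homology of $\F{0}$; the spaces for $i<0$ are $\F{i}=\Omega^{-i}\F{0}$, and looping preserves torsion-freeness and the polynomial/exterior form. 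For the inductive step run the bar spectral sequence \cite{RS} over $\Zp$: by hypothesis $H_*(\F{i-1};\Zp)$ is $\Zp$-free and is polynomial on even generators or exterior on odd generators, so $E^2=\Tor^{H_*(\F{i-1};\Zp)}_{*,*}(\Zp,\Zp)$ is, by the same computation as in Proposition~\ref{free} over $\Zp$, respectively exterior on odd generators or a divided power algebra on even generators, and in either case is $\Zp$-free. The filtration-degree argument of Proposition~\ref{free}, in the divided-power case combined with the observation that $E^2$ then lies in even total degrees while a differential lowers total degree by one, leaves no room for differentials, so $E^\infty=E^2$ is $\Zp$-free and $H_*(\F{i};\Zp)$ is torsion free. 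This finishes (b), hence the polynomial/exterior statement.

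For (a): the fibration $\F{i}\to\X{i}\to\F{i+2}$ comes from the cofibre sequence $F\to X\to\Sigma^2 F$ of spectra, so it is the pullback of the path--loop fibration $\F{i}\to P\F{i+1}\to\F{i+1}$ along the map $\partial\colon\F{i+2}\to\F{i+1}$ induced by the connecting map $\Sigma^2 F\to\Sigma F$. By part~(b), $H^*(\F{i+2};\Zq)$ and $H^*(\F{i+1};\Zq)$ are concentrated in degrees $\equiv i$ and $\equiv i+1\pmod 2$, so $\partial$ is trivial on reduced mod $2$ cohomology. In the cohomology Serre spectral sequence of $\F{i}\to\X{i}\to\F{i+2}$ this annihilates the transgression, and every earlier differential, on the algebra generators of the fibre $H^*(\F{i})$ --- these differentials are pulled back from the path--loop spectral sequence and factor through $\partial^*$ on $\widetilde H^*(\F{i+1})$ --- while the bottom-row generators $H^*(\F{i+2})$ support no differentials for dimensional reasons; hence the spectral sequence collapses with no multiplicative extension. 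Therefore $H_*(\X{i})$ is a Hopf-algebra extension of its top quotient $H_*(\F{i+2})$ by its filtration-zero sub-Hopf-algebra $H_*(\F{i})$, i.e.\ $\Zq\to H_*(\F{i})\to H_*(\X{i})\to H_*(\F{i+2})\to\Zq$ is exact. The cases $i<0$ follow from $i=0$ by looping this short exact sequence of polynomial Hopf algebras.

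The hard part is the base case of (b): starting the induction forces exactly the degree-by-degree analysis of $\F{0}$ --- and underneath it of the negative spaces --- that the paper repeatedly flags as the delicate input, and which must be reconciled with the known homology of the $BP$-side. Once that anchor is in hand, the inductive step and the derivation of the short exact sequence are essentially formal, the one genuine subtlety being the single-parity observation, which simultaneously annihilates the bar-spectral-sequence differentials and the map $\partial$ on cohomology.
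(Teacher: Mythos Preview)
Your separation into (b) torsion-freeness and then (a) the short exact sequence via parity is attractive, but the argument has a real gap exactly where you flag it: the base case of (b). You cannot read off $H_*(\F{0};\Zp)$ from the fibration $\F{0}\to\X{0}\to\F{2}$ ``degree by degree using that $\F{2}$ is more highly connected'' except in degrees below the connectivity of $\F{2}$; beyond that you need $H_*(\F{2})$, which in your scheme is only produced \emph{after} $H_*(\F{0})$. Likewise ``looping preserves torsion-freeness and the polynomial/exterior form'' is not a general fact --- the bar spectral sequence runs from $H_*(\Omega X)$ to $H_*(X)$, not the other way --- so it cannot be invoked to pass from $\F{0}$ to $\F{i}$ for $i<0$ without already knowing the $Y_k$-splitting you are trying to establish. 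What is actually needed here is a \emph{simultaneous} induction on degree over all $i$ at once, and that is precisely the paper's proof; your inductive step on $i$ then becomes redundant.

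There is a second, independent problem: the implication ``$H_*(\F{i};\Zp)$ free $\Rightarrow$ $H_*(\F{i};\Zq)$ polynomial'' via Theorem~\ref{wsw2} is not justified. The $Y_k$ are only asserted to have \emph{torsion-free} $\Zp$-homology, not polynomial $\Zq$-homology, and \cite{WSWThesis} concerns $\ul{BP}_{\,k}$ rather than the truncated $\BPn{j}{k}$. The paper sidesteps this entirely: its degree-by-degree induction produces, at each stage, an injection of Hopf algebras $H_*(\F{i})\hookrightarrow H_*(\X{i})$, and it is the already-known polynomiality of $H_*(\X{i})$ (coming from the $BP$ side, Proposition~\ref{xprop}) that forces $H_*(\F{i})$ to be polynomial. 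That injection is the engine of the argument, and it is exactly what your approach discards. Your parity collapse of the Serre spectral sequence for part~(a) is correct \emph{once} the single-parity statement is in hand, but it needs parity for $\F{i+1}$ and $\F{i+2}$ up through $i=6$, i.e.\ for $\F{7}$ and $\F{8}$, which lies outside the range you have argued.
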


\begin{remark}
This, plus the Conner-Floyd result, equation \ref{cf} and theorem \ref{cf66}, 
gives theorem \ref{secthm} for $i \le 0$.
We will have to return to theorem \ref{secthm} later for the $0 < i < 6$.
\end{remark}

\begin{proof}
We already have the homology of $\X{i}$ by proposition \ref{xprop}.
We do our proof by induction on degree.
We know that on the zero-degree homology we have such
an exact sequence because there the groups are just given by the homotopy groups
where we have exactness from 
lemma \ref{splithomotopy}.
Exactness for
$H_1(-)$
follows from our convention on $\Tor$ or just using the Hurewicz isomorphisms.
$H_2(-)$ and exactness follows in the same way.
One can even go one step further and get $H_3(-)$.
This starts our induction.

For our induction, we assume the result for degrees $* < j$ and we will show
it holds for degree $j$ as well.
By induction, we have the result for
\[
\Zq \ra H_*(\F{i-1}) \lra H_*(\X{i-1}) \lra H_*(\F{i+1}) \ra \Zq \qquad * < j
\]
Since these are either exterior or polynomial, this exact sequence is split
as algebras.  Computing $\Tor$ only depends on the algebra structure, so
in our range $* \le j$, we know from proposition \ref{free} that we get a short exact sequence
of Hopf algebras 
on the $E^2 = E^\infty$ terms.

When $i$ is odd, we are done because the results are exterior.  When $i$ is even,
we get the inclusion $H_*(\F{i}) \ra H_*(\X{i})$.  Since the second one is
polynomial, the first must also be polynomial (Hopf algebra structure requires
this).

However, we have not yet shown that $H_*(\F{i+2})$ is polynomial for $i$ even, $i < 6$. 
To see this,
we use the following short exact sequence we have by induction 
\[
\Zq \ra H_*(\F{i+1}) \lra H_*(\X{i+1}) \lra H_*(\F{i+3}) \ra \Zq \qquad * < j
\]
Since $i$ is even and 
$i < 6$,
$i+1 < 6$ fits our induction hypothesis.
Once again we apply $\Tor$ to this split short exact sequence of algebras to
get a short exact sequence on the
$E^2 = E^\infty$ terms.  Now we have 
the inclusion $H_*(\F{i+2}) \ra H_*(\X{i+2})$ forcing $H_*(\F{i+2})$ to
be polynomial.  
If $i = 4$, this gives 
\[
\Zq \ra H_*(\F{6}) \lra H_*(\X{6}) \lra H_*(\F{8}) \ra \Zq \qquad * \le  j
\]
In this case, the right hand term need not be polynomial, but is still even
degree (and cofree).
\end{proof}

\begin{proof}[Proof of theorem \ref{secthm}]
We
use the 
Atiyah-Hirzebruch spectral sequence 
(AHSS)
for the fibration $\F{i} \ra \BoP{i} \ra \bo{i}$, $0 < i \le 6$.
The $E^2$ term is 
\[
E^2 \simeq H_*(\F{i})\otimes H_*(\bo{i}) \Rightarrow H_*(\BoP{i})
\]
These spectral sequences all collapse.  
For $i=2,4,$ and $6$ they collapse because they are even degree.
For $i=3$ and $5$, everything is exterior on odd degree generators so there is
no place for a differential to go.
The $i=1$ case is a bit more complicated.  
The $H_*(\F{1})$ term is exterior on odd degree generators and $H_*(\bo{1})$
is polynomial on odd degree generators.  Differentials must start on the odd
degree polynomial generators of $H_*(\bo{1})$ and end up in an even degree, so,
as with the exterior case, there is nothing to hit.
In addition, there are no algebra extension problems for $0 < i < 6$.
For $1 < i < 6$ this is because the $E^2$ term is either exterior on odd degree
generators or polynomial on even degree generators.
In the case of $i=1$, we have to look at the maps 
$H_*(\F{1})\ra H_*(\BoP{1})\ra H_*(\bo{1})$ to see that the exterior elements from
$H_*(\F{1})$ cannot have extension problems and since $H_*(\bo{1})$ is polynomial,
this splits as algebras.
\end{proof}

\begin{remark}
\label{interest}
Where this gets interesting  is for the $i=6$ case.
We have the short exact sequence
$H_*(\F{6})\ra H_*(\BoP{6})\ra H_*(\bo{6})$. We will soon see that $H_*(\BoP{6})$
is polynomial.  However, $H_*(\bo{6})$ is exterior.  Later we will even show that
$\BoP{6} \simeq \F{6} \times \bo{6}$, just not as H-spaces.  So, the squares of
the exterior generators in $H_*(\bo{6})$ must be non-zero in $H_*(\F{6})$.
\end{remark}

\section{$H_*(\BoP{i})$, $i \ge 2$}
\label{hbop}

To begin our induction for the proof of theorem \ref{firstthm}, we need the following.

\begin{thm}
There is a short exact sequence of polynomial Hopf algebras:
\[
\Zq \ra H_*(\BoP{2}) \lra H_*(\BPP{2}) \lra H_*(\BoP{4})  \ra \Zq
\]
\end{thm}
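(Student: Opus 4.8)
The plan is to assemble this base case from the $\F/\X$ and $bo/bu$ computations already in hand, together with the Conner--Floyd splittings. By Theorem~\ref{cf66} at levels $2$ and $4$ we have, as Hopf algebras, $H_*(\BoP{2})\simeq H_*(\F{2})\otimes H_*(\bo{2})$, $H_*(\BPP{2})\simeq H_*(\X{2})\otimes H_*(\bu{2})$, and $H_*(\BoP{4})\simeq H_*(\F{4})\otimes H_*(\bo{4})$; hence all three are polynomial on even degree generators (Theorem~\ref{secthm} for the outer two, Proposition~\ref{xprop} for the middle). Multiplying the Poincar\'e-series identities from these three splittings and from the two short exact sequences $\Zq\to H_*(\F{2})\to H_*(\X{2})\to H_*(\F{4})\to\Zq$ of Theorem~\ref{xf} and $\Zq\to H_*(\bo{2})\to H_*(\bu{2})\to H_*(\bo{4})\to\Zq$ of Proposition~\ref{xprop} gives the numerical identity that the Poincar\'e series of $H_*(\BPP{2})$ equals the product of those of $H_*(\BoP{2})$ and $H_*(\BoP{4})$.

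It remains to produce the maps and show they fit an exact sequence. Applying the Omega-spectrum functor at level $2$ to Diagram~\ref{bigone} (so the bottom row becomes $\F{4}\to\BoP{4}\to\bo{4}$, as in Lemma~\ref{splithomotopy}) yields a $3\times 3$ grid of fibrations of infinite loop spaces whose middle column $\BoP{2}\xrightarrow{f_2}\BPP{2}\xrightarrow{g_2}\BoP{4}$ consists of the maps induced by Baker's cofibration $BoP\to\SBPP\to\Sigma^2 BoP$, whose rows split by Theorem~\ref{cf66}, and whose outer columns are the two short exact sequences above. Since $g_2 f_2\simeq *$, the Hopf algebra map $g_{2*}$ annihilates $f_{2*}(\tilde H_*(\BoP{2}))$. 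The top-left square of the grid shows that $f_{2*}$ restricts on the sub-Hopf-algebra $H_*(\F{2})$ to the composite of injections $H_*(\F{2})\hookrightarrow H_*(\X{2})\hookrightarrow H_*(\BPP{2})$, and the right-hand square shows that $f_{2*}$ induces the injection $H_*(\bo{2})\hookrightarrow H_*(\bu{2})$ on the $bo$-quotients; filtering $H_*(\BPP{2})$ by powers of the augmentation ideal of $H_*(\X{2})$ and comparing indecomposables then shows the polynomial generators of $H_*(\BoP{2})$ are carried to part of a polynomial generating set, so $f_{2*}$ is injective. Dually, the two bottom squares show the image of $g_{2*}$ contains $H_*(\F{4})$ and surjects onto the quotient Hopf algebra $H_*(\BoP{4})/\!\!/H_*(\F{4})\simeq H_*(\bo{4})$, so $g_{2*}$ is surjective. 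By Milnor--Moore (\cite{MM}), the injective Hopf algebra map $f_{2*}$ sits in a short exact sequence $\Zq\to H_*(\BoP{2})\to H_*(\BPP{2})\to Q\to\Zq$ with $Q=H_*(\BPP{2})/\!\!/H_*(\BoP{2})$; since $g_{2*}$ annihilates $f_{2*}(\tilde H_*(\BoP{2}))$ it factors through a surjection $Q\twoheadrightarrow H_*(\BoP{4})$, which by the Poincar\'e-series identity of the first paragraph must be an isomorphism. This is the asserted short exact sequence, of polynomial Hopf algebras by the first paragraph.

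The step I expect to be the main obstacle is the map-compatibility argument of the second paragraph. The Conner--Floyd splittings are splittings of spaces only, and the sections $\bo{i}\to\BoP{i}$ and $\bu{2}\to\BPP{2}$ are not part of Baker's cofibration diagram, so on the $bo$-factor the map $f_{2*}$ is pinned down only through its projection to $H_*(\bu{2})$; turning this into injectivity (and dually into surjectivity of $g_{2*}$) requires the indecomposable/filtration bookkeeping sketched above. One cannot simply push Theorem~\ref{xf}'s degree-by-degree bar spectral sequence argument up one more level: at $i=1$ the analogous sequence fails to be exact, because the $2$-torsion of $bo$ in degrees $\equiv 1,2\pmod 8$ makes $H_*(\BoP{1})\simeq H_*(\F{1})\otimes H_*(\bo{1})$ carry a polynomial factor that the exterior algebra $H_*(\BPP{1})$ cannot receive. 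Getting past that ``exotic transition'' from the negative spaces, across levels $0$ and $1$, is exactly what makes the $i=2$ case the delicate starting point for the induction of Theorem~\ref{firstthm}.
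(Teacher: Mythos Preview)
Your approach and the paper's are built on the same scaffolding---the $3\times 3$ diagram of fibrations at level~$2$ together with the two known short exact sequences from Theorem~\ref{xf} and Proposition~\ref{xprop}---but the execution differs. The paper simply applies the Serre/Atiyah--Hirzebruch spectral sequence to each of the three horizontal fibrations and uses its functoriality: the vertical maps of the diagram induce maps of spectral sequences, and on the $E^2$-page these are literally the tensor products
\[
H_*(\F{2})\otimes H_*(\bo{2}) \lra H_*(\X{2})\otimes H_*(\bu{2}) \lra H_*(\F{4})\otimes H_*(\bo{4}),
\]
which form a short exact sequence because each tensor factor does. Everything is even degree, so $E^2=E^\infty$; everything is polynomial, so there are no extension problems. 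Injectivity, surjectivity, and exactness in the middle all come for free.

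Your route---proving injectivity of $f_{2*}$ and surjectivity of $g_{2*}$ by tracking sub- and quotient Hopf algebras, then closing the gap with a Poincar\'e-series count---is correct, but it is essentially a reconstruction by hand of what the AHSS functoriality gives directly. The ``main obstacle'' you flag (that the Conner--Floyd splittings need not be compatible with Baker's maps, so the sections are not available) is exactly what the paper sidesteps: the Serre filtration comes from the fibration itself, so the maps on associated graded are automatically the tensor-product maps, and no choice of section is ever made. Your filtration ``by powers of the augmentation ideal of $H_*(\X{2})$'' is a proxy for the Serre filtration, which is why your argument works; invoking the AHSS makes the bookkeeping disappear and the Poincar\'e-series step unnecessary.

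One small citation issue: the tensor-product descriptions of $H_*(\BoP{2})$ and $H_*(\BoP{4})$ are \emph{algebra} isomorphisms coming from the AHSS collapse in the proof of Theorem~\ref{secthm}, not Hopf-algebra splittings, and not directly from Theorem~\ref{cf66} (whose $i>0$ cases are only established later). This does not affect your argument, since you only use the algebra structure and the genuine sub/quotient Hopf algebra sequence $H_*(\F{i})\to H_*(\BoP{i})\to H_*(\bo{i})$.
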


\begin{proof}
Consider the diagram 
\begin{diag}
\[
\label{onlytwo}
\xymatrix{
\F{2} \ar[r] \ar[d] & \BoP{2} \ar[r] \ar[d] &  \bo{2} \ar[d] \\
\X{2} \ar[r] \ar[d] & \BPP{2} \ar[r] \ar[d] & \bu{2} \ar[d] \\
 \F{4} \ar[r]  &  \BoP{4} \ar[r]  &  \bo{4}  \\
}
\]
\end{diag}
Take the $E^2$-terms of the  AHSS for the horizontal fibrations and maps to get
\[
H_*(\F{2})\otimes H_*(\bo{2})
\lra
H_*(\X{2})\otimes H_*(\bu{2})
\lra
H_*(\F{4})\otimes H_*(\bo{4})
\]
These spectral sequences are all even degree so collapse, 
are polynomial so there are no extension problems, and the maps form the required 
short exact sequence
by theorem  \ref{xf} and proposition \ref{xprop}.
\end{proof}

\begin{proof}[Proof of theorem \ref{firstthm}]
We proceed pretty much as we did with the short exact sequences for negative spaces.
We inductively assume our result for all $* < j$.
We can certainly start our induction because we know the result for $j = 2$
(the only groups here are 
$H_2(\BoP{2}) \simeq H_2(\BPP{2}) \simeq \Zq$.  All others are zero.)
So, we do our induction on $j$.  If $i$ is odd, $i > 2$, we know
\begin{diag}
\label{iminus}
\[
\Zq \ra H_*(\BoP{i-1}) \lra H_*(\BPP{i-1}) \lra H_*(\BoP{i+1}) \ra \Zq
\]
\end{diag}
\noindent
satisfies our inductive hypothesis for $* < j$.  

Compute $\Tor$ on this split polynomial short exact
sequence to get a split short exact (collapsing) bar spectral 
sequence of exterior algebras, by proposition \ref{free}.  
This completes the $i$ odd cases.

If $i$ is even, we can assume $i > 2$ because we have done all of $i=2$ already.

We have diagram \ref{iminus}, for $* < j$,
where the $i-1$ is now odd (and $>2$).  Taking $\Tor$ for the bar spectral sequence we get a short exact
sequence of collapsing even degree divided power algebras (proposition \ref{free}).  The middle term is
polynomial so the left hand side, $H_*(\BoP{i})$ is as well, for degrees $* \le j$.  
The problem is to show that the right hand term,
$H_*(\BoP{i+2})$, is polynomial.  However, we have 
\[
\Zq \ra H_*(\BoP{i+1}) \lra H_*(\BPP{i+1}) \lra H_*(\BoP{i+3}) \ra \Zq
\]
satisfies our induction hypothesis for $* < j$.  
(This is why we can't just do an induction on $i$ for all degrees at once.)
These are all odd, so exterior.  Taking
$\Tor$ of this gives our usual short exact sequence of Hopf algebras, divided power
algebras on even degrees (so collapse) through degrees $* \le j$.  Because the
middle term is polynomial, the left hand term is too, i.e.
$H_*(\BoP{i+2})$.

This completes the proof.
\end{proof}

\section{Proof of the splitting}
\label{secbosplit}

The proof of theorem \ref{bosplit} is mostly about knowing how to write the homotopy
groups.  To this end we have:

\begin{lemma}
As graded abelian groups
\[
BoP_* \simeq bo_* \oplus_{k \ge 2} \oplus_{u=0}^{2^{k-2} -1 } \Sigma^{2^{k+1}+8u-2}
\SBPn{k}_*
\]
\end{lemma}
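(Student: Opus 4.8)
The plan is to reduce the lemma to an identification of $\pi_*F$ and then to a single generating‑function identity. First, Pengelley's fibration $F\ra BoP\ra bo$ gives a short exact sequence $0\ra F_*\ra BoP_*\ra bo_*\ra 0$ in which $F_*$ is torsion free while $BoP_*\ra bo_*$ restricts to an isomorphism on torsion subgroups (this is what ``$bo$ carries all the torsion of $BoP$'' means). Each homotopy group is finitely generated over $\Zp$, so the torsion splits off degreewise; the remaining extension $0\ra F_*\ra BoP_*/\mathrm{tors}\ra bo_*/\mathrm{tors}\ra 0$ splits because $bo_*/\mathrm{tors}$ is free. Hence $BoP_*\iso bo_*\oplus F_*$ as graded abelian groups, and it remains to prove
\[
F_*\iso\bigoplus_{k\ge 2}\ \bigoplus_{u=0}^{2^{k-2}-1}\Sigma^{2^{k+1}+8u-2}\,\SBPn{k}_*.
\]

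Next I would pin down $F_*$ using the two cofibrations that meet at $X$. In diagram \ref{bigone} the left‑hand column $F\ra X\ra\Sigma^2F$ is a cofibration, and by Lemma \ref{splithomotopy} the induced sequence of homotopy groups is split short exact of free $\Zp$‑modules; hence $X_*\iso F_*\oplus\Sigma^2F_*$, so the Poincaré series obey $P_X(t)=(1+t^2)P_F(t)$. On the other hand the cofibration $X\ra\SBPP\ra bu$ has $\SBPP=\prod_{a\ge 0}\Sigma^{8a}BP\ra bu=BP\langle 1\rangle$ equal to the canonical surjection $BP\ra BP\langle 1\rangle$ on the bottom summand and zero on the rest; since $bu_*$ is free this gives $\SBPP_*\iso X_*\oplus bu_*$, so $P_X(t)=\frac{1}{1-t^8}P_{BP}(t)-\frac{1}{1-t^2}$, using $BP_*=\Zp[v_1,v_2,\dots]$ and $bu_*=\Zp[v_1]$. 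I would also record the telescoping identity
\[
P_{BP}(t)=\frac{1}{1-t^2}+\sum_{n\ge 2}t^{\,2^{n+1}-2}\,P_{\SBPn{n}}(t),
\]
immediate from $\SBPn{n}_*=\Zp[v_1,\dots,v_n]=\SBPn{n-1}_*\otimes\Zp[v_n]$ with $|v_n|=2^{n+1}-2$ (Theorem \ref{wsw2}[3.]), since then $P_{\SBPn{n}}-P_{\SBPn{n-1}}=t^{|v_n|}P_{\SBPn{n}}$.

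Now assemble. Every graded group involved --- $F_*$, $X_*$, and the proposed direct sum --- is torsion free and finitely generated over $\Zp$ in each degree, so it suffices to compare Poincaré series. Summing the inner geometric progression $\sum_{u=0}^{2^{k-2}-1}t^{8u}=\frac{1-t^{2^{k+1}}}{1-t^8}$, substituting the identity for $P_{BP}$ into $P_X=(1+t^2)P_F$, and clearing $1-t^8$, the target reduces to
\[
\sum_{k\ge 2}\Bigl(t^{\,2^{k+1}}-(1+t^2)\,t^{\,2^{k+2}-2}\Bigr)P_{\SBPn{k}}(t)=\frac{t^8}{1-t^2}.
\]
This is a telescoping sum: with $S_m$ the partial sum over $2\le k\le m$, one checks by induction that $S_m=\frac{t^8}{1-t^2}-t^{\,2^{m+2}}P_{\SBPn{m}}(t)$, the inductive step using $P_{\SBPn{m+1}}=P_{\SBPn{m}}/(1-t^{\,2^{m+2}-2})$. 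Since $t^{\,2^{m+2}}P_{\SBPn{m}}(t)$ is zero in any fixed degree once $m$ is large, letting $m\to\infty$ yields the identity, and hence the lemma.

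The one substantive step is this final generating‑function computation; everything else is routine bookkeeping with split short exact sequences of homotopy groups. The minor points to watch are the precise meaning of ``$bo$ carries all the torsion'' (an isomorphism on torsion subgroups, which is what makes $BoP_*\iso bo_*\oplus F_*$), and that all the products over $a$ and direct sums over $(k,u)$ are finite in each degree, so no convergence question arises.
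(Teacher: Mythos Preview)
Your proof is correct and follows essentially the same route as the paper: both reduce the statement to a Poincar\'e series identity using the cofibrations of diagram~\ref{bigone} and then verify that identity by a telescoping argument. The differences are purely organizational---you work with $P_F$ and $P_X$ via the left column together with the auxiliary identity $P_{BP}=\tfrac{1}{1-t^2}+\sum_{n\ge 2}t^{2^{n+1}-2}P_{\SBPn{n}}$, whereas the paper uses the middle column (display~\ref{stable}) to write $P_{BoP}$ directly and telescopes in product form via auxiliary series $A_s,B_s,C_s$.
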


\begin{proof}
$BoP_*$ and $bo_*$ have the same $\Zq$ groups and everything else is torsion free.
As a result, all we have to do is show both sides are the same rationally.
The  Poincare series for the rational homotopy of the $\SBPP_*$ 
is: 
\[
\frac{1}{1-x^8} \times \prod_{j > 0} \frac{1}{1-x^{2(2^j-1)}}
\]
From this and the rational short exact sequence of display \ref{stable} and lemma \ref{splithomotopy}, 
we can read off the Poincare 
series for the homotopy of $BoP$:
\[
\frac{1}{1-x^8} \times
\frac{1}{1-x^4} 
\times \prod_{j > 1} \frac{1}{1-x^{2(2^j-1)}}
\]
The rational Poincare series for the right hand side (RHS) is 
\[
\frac{1}{1-x^4} 
+
\sum_{k \ge 2} x^{2^{k+1}-2} \sum_{u=0}^{2^{k-2} -1 } 
\frac{x^{8u}}{\prod_{j > 0}^k (1-x^{2(2^j-1)})} 
\]
We need to show these two are the same.
Multiply both sides by the denominator on the left hand side, i.e. 
\[
(1-x^8) 
(1-x^4) 
 \prod_{j > 1} (1-x^{2(2^j-1)})
\]
This would leave a $1-x^2$ in the denominator of the RHS, but we can divide
that into $1-x^4$ to get $1+x^2$ in the numerator.  We want to show:
\[
1=
(1-x^8) 
 \prod_{j > 1} (1-x^{2(2^j-1)})
+
(1-x^8)(1+x^2)
\sum_{k \ge 2} x^{2^{k+1}-2} 
{\prod_{j > k} (1-x^{2(2^j-1)})} 
(\sum_{u=0}^{2^{k-2} -1 } 
x^{8u})
\]
But
\[
\sum_{u=0}^{2^{k-2} -1 } 
x^{8u}
=
\frac{1 - x^{2^{k+1}}}{1-x^8}
\]
The $(1-x^8)$ cancels out and we have the RHS is
\[
(1-x^8) 
 \prod_{j > 1} (1-x^{2(2^j-1)})
+
\sum_{k \ge 2} x^{2^{k+1}-2} 
(1+x^2)
(1 - x^{2^{k+1}})
{\prod_{j > k} (1-x^{2(2^j-1)})} 
\]
We need three definitions to complete the proof.  Let
\[
A_s = \sum_{k \ge s} x^{2^{k+1} -2 } (1+x^2)(1-x^{2^{k+1}}) \prod_{j > k} (1-x^{2(2^j-1)})
\]
\[
B_s = (1-x^{2^{s+1}}) \prod_{j \ge s}(1 - x^{{2(2^j -1)}})
\]
\[
C_s = x^{2(2^s-1)} (1+x^2) (1 - x^{2^{s+1}}) \prod_{j > s} (1 - x^{2(2^j-1)}) 
\]
We have two straightforward identities:
\[
RHS = B_2 + A_2
\qquad
\qquad
A_s = C_s + A_{s+1}
\]
We plan to show that
\[
B_{s+1} = B_s + C_s
\]
This would give us
\[
B_s + A_s = B_s + C_s + A_{s+1} = B_{s+1} + A_{s+1}
\]
We note that $A_s$ has higher and higher powers of $x$ in it
and that $B_s -1$ also has higher and higher powers of $x$ in it.
Take the limit as $s$ goes to infinity and we see that the RHS $=1$.
We still have to show the inductive step.
So, we compute 
$B_s + C_s$.
We have
\[
 (1-x^{2^{s+1}}) \prod_{j \ge s}(1 - x^{{2(2^j -1)}})
+ x^{2(2^s-1)} (1+x^2) (1 - x^{2^{s+1}}) \prod_{j > s} (1 - x^{2(2^j-1)}) 
\]
Factor out 
\[
\prod_{j > s} (1 - x^{2(2^j-1)}) 
\]
To get
\[
\Big[ 
 (1-x^{2^{s+1}}) (1 - x^{{2(2^s -1)}})
+ x^{2(2^s-1)} (1+x^2) (1 - x^{2^{s+1}}) 
\Big]
\prod_{j > s} (1 - x^{2(2^j-1)}) 
\]
Looking at what is in the brackets, we have
\[
 (1-x^{2^{s+1}}) \Big[ (1 - x^{{2(2^s -1)}})
+ x^{2(2^s-1)} (1+x^2)\Big] 
= 
\]
\[
 (1-x^{2^{s+1}})  (1 + x^{2^{s+1}}  )
= 
1 - x^{2^{s+2}}
\]
which is exactly what we needed to finish the proof.
\end{proof}

We need to show that 
\begin{claim}
The
$\BPn{k}{2^{k+1}+8u+4}$ of theorem \ref{bosplit} are irreducible.
\end{claim}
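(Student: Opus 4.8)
The plan is to recognize each factor $\BPn{k}{2^{k+1}+8u+4}$ as one of the irreducible H-spaces $Y_m$ produced by theorem \ref{wsw2}, so that irreducibility is immediate from part [1.] of that theorem. By theorem \ref{wsw2}[4.], the identification $\BPn{k}{m}\simeq Y_m$ holds precisely when $2^{k+1}-2 < m \le 2^{k+2}-2$. Thus the whole content of the claim is the arithmetic check that, for $k\ge 2$ and $0\le u \le 2^{k-2}-1$, the degree $m = 2^{k+1}+8u+4$ lies in this window.

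The lower bound is automatic, since $m = 2^{k+1}+8u+4 \ge 2^{k+1}+4 > 2^{k+1}-2$ for every $u\ge 0$. The upper bound is exactly where the hypothesis $u < 2^{k-2}$ of theorem \ref{bosplit} is used: from $u \le 2^{k-2}-1$ we get $8u \le 2^{k+1}-8$, hence
\[
m = 2^{k+1}+8u+4 \le 2^{k+1} + (2^{k+1}-8) + 4 = 2^{k+2}-4 < 2^{k+2}-2 .
\]
Therefore $2^{k+1}-2 < m \le 2^{k+2}-2$, and theorem \ref{wsw2}[4.] gives $\BPn{k}{2^{k+1}+8u+4}\simeq Y_{2^{k+1}+8u+4}$, which is irreducible by theorem \ref{wsw2}[1.].

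There is no real obstacle here beyond being careful at the two ends of the $u$-range, where the inequalities fail by a hair and the conclusion genuinely breaks. If $u$ were allowed to reach $2^{k-2}$ the degree would be $2^{k+2}+4$, which lands in the window of $\SBPn{k+1}$ rather than $\SBPn{k}$, and $\BPn{k}{2^{k+2}+4}$ is then reducible; and the would-be bottom value $m = 2^{k+1}-2$ is excluded because $\BPn{k}{2^{k+1}-2}\simeq \BPn{k-1}{2^{k+1}-2}\times \BPn{k}{2^{k+2}-4}$ by theorem \ref{wsw2}[5.]. The two displayed inequalities keep $m$ strictly between these reducible boundary cases, so the claim follows.
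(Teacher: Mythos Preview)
Your proof is correct and follows exactly the paper's approach: reduce to theorem \ref{wsw2}[4.] and verify the inequality $2^{k+1}-2 < 2^{k+1}+8u+4 \le 2^{k+2}-2$ for $0\le u < 2^{k-2}$, which the paper leaves as ``a simple exercise'' and you carry out explicitly. The closing paragraph on the boundary cases is extra commentary rather than part of the argument, but it does no harm.
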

\begin{proof}
By theorem \ref{wsw2}[(4)], all we need to do is show that
$2^{k+1} - 2 < 2^{k+1}+8u+4 \le 2^{k+2} -2$ 
when $2^{k-2} > u \ge 0$.  This is a simple exercise.
\end{proof}

\begin{proof}[Proof of theorem \ref{bosplit}]
From remark \ref{interest}, we have the short exact sequence
\[
\Zq \ra H_*(\F{6}) \lra H_*(\BoP{6}) \lra H_*(\bo{6}) \ra \Zq
\]
We have shown that $\F{6}$ has no torsion in homology or homotopy,
so 
theorem \ref{wsw2}[(2)] gives us the homotopy type of $\F{6}$ as the
$\BPn{k}{i}$ part of theorem \ref{bosplit}.
We need a map
\[
\BoP{6} \lra
\bo{6}  \times 
\prod_{2^{k-2} > u  \ge 0} \BPn{k}{2^{k+1}+8u+4}
\]
that gives an isomorphism on homotopy.  We have the map to $\bo{6}$.
Using the proofs of theorem \ref{wsw2} in \cite{WSWThesis2} we can construct our
other maps.  If we have a double no-torsion H-space, $Z$, to get a map
to $Y_k$, it is enough to start with a map $Z \ra K(\Zp,k)$.  All the
k-invariants are torsion (trivially true for any H-space) 
and so zero, so we can lift the map to $Z \ra Y_k$.
Picking our $k_i$ appropriately, this gives the required map $Z \ra \prod Y_{k_i}$.
In our case, we have the maps from $\F{6}$, but by the short exact sequence,
we can lift cohomology classes of $\F{6}$ to $\BoP{6}$.  $\BoP{6}$ has no
torsion in cohomology, so we can get our lifts to $Y_k$ even though $\BoP{6}$
has 2-torsion in homotopy.  This now gives us our maps and our homotopy equivalence.
\end{proof}

\section{Discussion of the conjecture, Part 1}
\label{secconj}

The easiest way to illustrate our evidence for our conjecture \ref{conj} is
to look at the $n=1$ case
\[
\BoPn{2}{6} \simeq \bo{6} \times \prod_{2^{k-2}  \ge  1 } 
\BPn{k}{2^{k+2}  -4}
\]
Since all the terms on the right hand side are even degree and have no torsion in homology, we would
want the property to hold if we delooped  twice to get our $\BoPn{2}{8}$.
We know that the splitting is not as H-spaces though because the homology of $\bo{6}$
is an exterior algebra.  If our $\BoPn{2}{8}$ exists, we need the squares of the
elements in $H_*(\bo{6})$ to lie in the other terms.  We know that they must be
somewhere in our splitting for $\BoP{6}$, we just want them in these specific 
spaces.

The  homology of
$\BPn{k}{2^{k+2}  -4}$ is known to be polynomial from \cite{WSWThesis}, but we are
going to need to get very technical and use the results of 
\cite{RW:HR} where we write down generators for this homology.  Since this is a speculative
part of the paper, we will not go into the necessary lengthy review of \cite{RW:HR}
needed, but the interested reader can pursue this on their own.

We have, from  remark
\ref{homologies},
$H_*(\bo{6}) \simeq E[x_{2j}]$,  $2j \ne 2^k$.
If we write 
\[
j = 2^{s_1} + 2^{s_2} + \cdots + 2^{s_k} \qquad \quad 0 \le s_1 < s_2 < \cdots < s_k  \qquad k > 1
\]

We conjecture that 

\[
x_{2j}^{*2} \simeq
b_{(s_1)}^2
b_{(s_2 -1)}^4
b_{(s_3 -2)}^8
\cdots
b_{(s_k -k + 1)}^{2^k}
\in
H_*(\BPn{k}{2^{k+2}  -4})
\]

If we have this relation and double suspend to the homology of
$\BPn{k}{2^{k+2}  -2}$, because suspension kills star products,
the elements 
\[
b_{(s_0)}
b_{(s_1)}^2
b_{(s_2 -1)}^4
b_{(s_3 -2)}^8
\cdots
b_{(s_k -k + 1)}^{2^k}
\in
\BPn{k}{2^{k+2}  -2}
\quad
\text{with}
\quad 
s_0 \le s_1
\]
would be zero.
The reason this looks good is because  the homology of 
$\BPn{k}{2^{k+2}  -2}$ is not polynomial.  Better, using \cite{RW:HR}, we can
see that the above elements are exactly the elements whose squares are zero, and
we cannot have such elements in the homology of
$\BoP{8}$ because it is polynomial.  

Every 8 deloopings we find ourselves in the same position and we have to incorporate
more and more of the $\BPn{k}{i}$ into our $\BoPn{n}{i}$ to maintain polynomial algebras with
no torsion.

\section{Discussion of the conjecture, Part 2}
\label{secconj2}

The conjecture for
$\SBPPn{n}$ follows from the conjecture for
$H^*(\SBoPn{n})$. 
This later conjecture comes from a hypothetical inductive (on $n$)
computation
based on the conjectured unstable splitting of 
$\SBoPn{n}$ in terms of 
$\SBoPn{n-1}$ and the $BP\langle k \rangle$.
It is what delayed the paper so long.

There are a few observations worth noting.  First, as $n$ goes
to infinity, 
$H^*(\SBoPn{n})$ goes to $H^*(BoP)$.

Next, if we look at the summand on degree $8q$, the first $n$
it appears with is $n = 2^J + 1 -u$ if $q = 2^J + u$, $0\le u < 2^J$.

In conclusion, and enterprising individual might compute the homology
of all our spaces giving names to the generators and then find the
splitting using this information.  It would seem to be a lot of work.


\end{document}